\newtheorem{prop}{Proposition}[section]
\newtheorem{theorem}[prop]{Theorem}
\newtheorem{lemma}[prop]{Lemma}
\newtheorem{corollary}[prop]{Corollary}
\newtheorem{remark}{Remark}
\newtheorem*{thmA}{Theorem A}
\newcommand{\Q}{\mathbb Q}
\newcommand{\Z}{\mathbb Z}
\newcommand{\R}{\mathbb R}
\newcommand{\C}{\mathbb C}
\newcommand{\A}{\mathbb A}
\newcommand{\Sym}{\mathrm{Sym}}
\newcommand{\Lrat}{{\mathcal L}}
\newcommand{\Ree}{\mathrm{Re}}
\newcommand{\sgn}{\mathrm{sgn}}
\DeclareMathOperator{\GL}{GL}
\begin{document}

\title[A comparison of automorphic and Artin L-series of GL(2)-type]{A comparison of automorphic and Artin L-series of GL(2)-type agreeing at degree one primes}

\author{Kimball Martin}
\address[Kimball Martin]{Department of Mathematics, University of Oklahoma, Norman, OK 73019 USA}
\thanks{The first author was partially supported by Simons Collaboration Grant 240605}

\author{Dinakar Ramakrishnan}
\address[Dinakar Ramakrishnan]{Department of Mathematics, Caltech,  Pasadena, CA 91125}
\thanks{The second author was partially supported by NSF grant DMS-1001916}

\dedicatory{To James Cogdell, with friendship and admiration}

\subjclass[2010]{Primary 11R39; Secondary 11F70, 11F80}

\maketitle

\section*{Introduction}

Let $F$ be a number field and $\rho$ an irreducible Galois representation of Artin type, i.e., 
$\rho$ is a continuous $\C$-representation of the absolute Galois group $\Gamma_F$. Suppose $\pi$ is a cuspidal automorphic representation of GL$_n(\A_F)$ such that the $L$-functions
$L(s,\rho)$ and $L(s,\pi)$ agree outside a set $S$ of primes.  (Here, these $L$-functions denote Euler products over just the finite primes, so that we may view them as Dirichlet series in a right half plane.) 
When $S$ is finite, the argument in Theorem 4.6 of \cite{deligne-serre} implies these two $L$-functions in fact agree at all places (cf.\ Appendix A of \cite{martin}).
We investigate what happens when $S$ is infinite of relative degree $\geq 2$, hence density $0$, for the test case $n=2$. Needless to say, if we already knew how to attach a Galois representation $\rho'$ to $\pi$ with $L(s,\rho')=L(s,\pi)$ (up to a finite number of Euler factors), as is the case when $F$ is totally real and $\pi$ is generated by a Hilbert modular form of weight one (\cite{wiles}, \cite{Rogawski-Tunnell}), the desired result would follow immediately from Tchebotarev's theorem, as the Frobenius classes at degree one primes generate the Galois group. Equally, if we knew that $\rho$ is modular attached to a cusp form $\pi'$, whose existence is known for $F=\Q$ and $\rho$ odd by Khare--Wintenberger (\cite{khare}), then one can compare $\pi$ and $\pi'$ using \cite{ramakrishnan-SMO}. However, the situation is more complex if $F$ is not totally real or (even for $F$ totally real) if $\rho$ is even. Hopefully, this points to a potential utility of our approach.

We prove the following

\begin{thmA}
Let $F/k$ be a cyclic extension of number fields of prime degree $p$. Suppose $\rho$ and $\pi$ are as above with their $L$-functions agreeing at all but a finite number of primes $P$ of $F$ of degree one over $k$.
Then $L(s,\rho)=L(s,\pi)$, and 
moreover, at each place $v$, $\pi_v$ is associated to $\rho_v$
according to the local Langlands correspondence.
In particular,
$\pi$ is tempered, i.e., satisfies the Ramanujan conjecture everywhere, and $L(s,\rho)$ is entire.
\end{thmA}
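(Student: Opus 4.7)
First I note that degree-one primes of $F$ over $k$ form a set of Dirichlet density one in $F$: the completely split primes of $k$ have density $1/p$ and each contributes $p$ primes of $F$, all of residue degree one. So the hypothesis provides matching local $L$-factors $L_P(s,\pi)=L_P(s,\rho)$ at a density-one set of primes of $F$ --- essentially the input for a strong multiplicity one argument, the novelty being that $\rho$ is Galois rather than a priori automorphic. My plan is to use this density-one input to force the tensor $L$-function $L(s, \pi \otimes \bar\rho)$ to have a pole at $s = 1$, and then to deduce by a Rankin--Selberg rigidity argument that $\pi$ and $\rho$ match under local Langlands at every place.

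\textbf{Pole at $s=1$.} Expanding
\[ \log L(s, \pi \otimes \bar\rho) \;=\; \sum_{P, \, m \geq 1} \frac{\mathrm{tr}(A_P(\pi)^m)\, \overline{\mathrm{tr}(\rho(\mathrm{Frob}_P)^m)}}{m\, N(P)^{ms}}, \]
the hypothesis collapses the $m=1$ contribution from degree-one primes to $\sum_{P\ \mathrm{deg}\,1} |a_P(\pi)|^2 N(P)^{-s}$. Since degree-one primes have density one in $F$, and all $m\geq 2$ terms and higher-degree $m=1$ terms converge absolutely for $\Re(s)>1/2$, this sum has the same $\log(1/(s-1))$-type singularity at $s=1$ as $\log L(s, \pi \times \tilde\pi)$, which by Jacquet--Shalika has a simple pole (as $\pi$ is cuspidal). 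To promote this formal statement to an honest pole one needs meromorphic continuation of $L(s, \pi \otimes \bar\rho)$; I would obtain this by Brauer induction, writing $\bar\rho = \sum_i n_i\, \mathrm{Ind}_{\Gamma_{E_i}}^{\Gamma_F} \chi_i$ and recognizing
\[ L(s, \pi \otimes \bar\rho) \;=\; \prod_i L(s, \mathrm{BC}_{E_i/F}(\pi) \otimes \chi_i)^{n_i}, \]
each factor being automorphic (hence meromorphic with controlled poles) by Arthur--Clozel whenever the intermediate extension $E_i/F$ is solvable. For two-dimensional $\rho$, solvability fails only when the projective image is the icosahedral group $A_5$; the cyclic, dihedral, tetrahedral, and octahedral cases are handled cleanly.

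\textbf{Conclusion and main obstacle.} Granted the pole at $s = 1$, a Rankin--Selberg rigidity argument (applied after Brauer-reducing $\rho$) forces $A_P(\pi)$ to be conjugate to $\rho(\mathrm{Frob}_P)$ at \emph{every} unramified prime $P$ of $F$. The remaining finite exceptional primes and ramified places are then resolved by the Deligne--Serre-type rigidity recalled in the Introduction (Theorem 4.6 of \cite{deligne-serre}; Appendix A of \cite{martin}), yielding the full local Langlands match at every place. Temperedness of $\pi$ is then automatic from finiteness of the image of $\rho$, and entirety of $L(s,\rho)$ from cuspidality of $\pi$. The principal obstacle is the icosahedral case in the meromorphic continuation step: Arthur--Clozel is not directly available over non-solvable extensions, and one would likely have to exploit the given cyclic extension $F/k$ --- inducing both $\pi$ and $\rho$ down to $k$ and studying the resulting $\chi$-invariant structures on $\Gamma_k$ --- or to invoke potential modularity for two-dimensional Artin $\rho$.
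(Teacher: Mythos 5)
Your density-one observation and the idea of forcing a pole of $L(s,\pi\otimes\bar\rho)$ at $s=1$ are reasonable first steps, but the proposal has a genuine gap at exactly the point where the paper's method is doing the real work. The Brauer-induction step does not deliver meromorphic continuation in general: writing $\bar\rho=\sum_i n_i\,\mathrm{Ind}_{E_i/F}\chi_i$ gives $L(s,\pi\otimes\bar\rho)=\prod_i L(s,\mathrm{BC}_{E_i/F}(\pi)\otimes\chi_i)^{n_i}$, but the intermediate fields $E_i$ sit inside the splitting field $\tilde F$ of $\rho$ as fixed fields of elementary subgroups $H_i$, and $E_i/F$ is a solvable extension only when $\mathrm{Gal}(\tilde F/F)$ is solvable. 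In the icosahedral case it is not, so Arthur--Clozel base change is unavailable and continuation fails. You flag this, but the suggested escape routes (inducing down to $k$, or invoking potential modularity of two-dimensional Artin $\rho$) either beg the question or assume precisely what the theorem is designed to circumvent: as the Introduction notes, if one already knew $\rho$ were modular, Tchebotarev together with \cite{ramakrishnan-SMO} would finish things at once. In the solvable (cyclic/dihedral/tetrahedral/octahedral) cases your ``Rankin--Selberg rigidity'' step also implicitly replaces $\rho$ by the cusp form attached to it via Langlands--Tunnell; it compares automorphic with automorphic, not automorphic with Galois. So the proposal reduces the theorem to the cases where $\rho$ is already known to be automorphic and supplies no argument in the remaining case.

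The paper's actual route is structured very differently and avoids any modularity input for $\rho$. It works with the two ratios $L^*(s,\pi_K\times\bar\pi_K)/L^*(s,\rho_K\times\bar\rho_K)$ and $L^*(s,\rho_K)/L^*(s,\pi_K)$ over carefully chosen solvable extensions $K$, exploiting positivity via Landau's Lemma~\ref{landau}. The crucial mechanism, Proposition~\ref{tcheb-prop} and the explicit degree-$4$ and degree-$8$ constructions of Section~3, pushes every prime of $F$ of degree $\geq 2$ over $k$ up to degree $\geq p^r$ in $K$, so the Euler factors at the primes where $\rho$ and $\pi$ might disagree contribute nothing in $\mathrm{Re}(s)\geq\frac12$. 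That yields temperedness of $\pi$ (Proposition~\ref{tempered-prop}) and then entireness of $L^*(s,\rho_K\otimes\chi)$ for all $\chi$ (Proposition~\ref{prop2} and its corollaries), at which point the Jacquet--Langlands converse theorem produces an automorphic $\rho_K$ and \cite{ramakrishnan-SMO} identifies it with $\pi_K$; a twisting argument then descends the correspondence from $K$ back to $F$. Your sketch uses the cyclicity of $F/k$ only to get density one, but the paper uses it essentially, to build the extensions $K=k(\alpha^{1/4})$, $K_1K_2$, etc.\ with the required splitting behaviour. You also invert the logical order by treating temperedness of $\pi$ and entireness of $L(s,\rho)$ as immediate consequences, whereas in the paper they are hard-won intermediate steps needed before the converse theorem can be applied.
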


The curious aspect of our proof below is that we first prove the temperedness of $\pi$, and then the entireness of $L(s,\rho)$, before concluding the complete equality of the global $L$-functions. Here are roughly the steps involved. As in the case when $S$ is finite,  consider the quotients $L(s,\pi)/L(s,\rho)$ and $L(s,\pi \times \overline\pi)/L(s,\rho\otimes\overline\rho)$, which reduce in $\Ree(s)>1$ to a quotient of Euler factors involving (outside finitely many) those of degree $\geq 2$. Then we work over $F'= F(\sqrt{-1})$ and, by applying \cite{ramakrishnan}, we find a suitable $p$-power extension $K$ of $F'$ over which even the degree $\geq 2$ Euler factors of the base changes $\pi_K$ and $\rho_K$ agree (outside finitely many), which furnishes the temperedness of $\pi$. Then we carefully analyze certain degree $8$ extensions $K$ of $F$ over which we prove that $L(s,\rho_K)$ is necessarily entire, which is not {\it a priori} obvious. In fact we prove this for sufficiently many twists $\rho\otimes\chi$ for unitary characters $\chi$ of $K$. Applying the converse theorem over $K$, we then conclude the existence of an automorphic form ${}_K\Pi$ of GL$(2)/K$ corresponding to $\rho_K$. We then identify $\pi_K$ with ${}_K\Pi$. The final step is to descend this correspondence $\rho_K \leftrightarrow \pi_K$ down to $F$, first down to $F'$ (by varying $K/F'$) and then down to $F$.


The analytic estimate we use for the inverse roots $\alpha_P$ of Hecke (for $\pi$) is that it is bounded above by a constant times $N(P)^{1/4-\delta}$ for a uniform $\delta >0$, and in fact much stronger results are known for GL$(2)$ (\cite{kim-shahidi}, \cite{kim-sarnak}, \cite{blomer-brumley}). We need the same estimate (of exponent $1/4-\delta$) for GL$(n)$ in general to prove even a weaker analogue of our theorem. (There is no difficulty for $n=3$ if $\pi$ is essentially selfdual.) There is a nice estimate for general $n$ due to Luo, Rudnick and Sarnak (\cite{LRS}), giving the exponent $1-1/2n^2$, but this does not suffice for the problem at hand. We can still deduce that for any place $v$, $L(s,\pi_v)$ has no pole close to $s=0$ to its right, and similarly, we can rule out poles of (the global $L$-function) $L(s,\rho)$ in a thin region in the critical strip.


One can ask the same question more generally for $\ell$-adic representations $\rho_\ell$ of $\Gamma_F$ satisfying the Fontaine-Mazur conditions, namely that the ramification is confined to a finite number of primes and that $\rho_\ell$ is potentially semistable. For the argument of this Note to apply, one would need to know in addition that (i) the Frobenius eigenvalues are pure, i.e., of absolute value $N(P)^{w/2}$ for a fixed weight $w$ for all but finitely many unramified primes $P$, and (ii) for any finite solvable Galois extension $K/F$, the $L$-function of the restrictions of $\rho_\ell$ and $\rho_\ell\otimes\rho_\ell^\vee$ to $\Gamma_K$ admit a meromorphic continuation and functional equation of the expected type, which is known in the Artin case by Brauer. When $F$ is totally real, we will need this for at least any compositum $K$ of a totally real (solvable over $F$) $K'$ with $F(\sqrt{-1})$. For $F$ totally real with $\rho_\ell$ odd ($2$-dimensional), crystalline and sufficiently regular relative to $\ell$, these conditions may follow from the potential modularity results of Taylor.

The first author thanks Jim Cogdell for being a wonderful teacher and for his encouragement.
The second author would like to thank Jim Cogdell for many years of stimulating mathematical conversations and friendship.  We also thank the referee for helpful comments which improved
the exposition.

\section{Notations and preliminaries}

Let $F/k$ be a cyclic extension of number fields of prime degree $p$,
$\rho: \Gamma_F \to \GL_2(\C)$ a continuous, irreducible representation,
and $\pi$ a cuspidal automorphic representation of $\GL_2(\A_F)$ with central
character $\omega_\pi$.

Suppose $L(s, \rho_v) = L(s, \pi_v)$ at almost all places $v$ of $F$ of
 degree 1 over $k$.
Then, at such $v$, $\det(\rho_v) = \omega_{\pi_v}$.  Consequently, $\det(\rho) = \omega_\pi$
globally.  Hence $\omega_\pi$ is finite order, so $\pi$ is unitary.

Now we will recall some basic results which we will use below.  

\begin{lemma}[Landau] \label{landau}
Let $L(s)$ be a Dirichlet series with Euler product which converges in some
right half-plane.  Further suppose that $L(s)$ is of positive type, i.e., that
$\log L(s)$ is a Dirichlet series with non-negative
coefficients.  Let $s_0$ be the infimum of all the real $s_1$ such that $L(s)$ is holomorphic
and non-vanishing in $\Ree(s) > s_1$.  Then, if $s_0$ is finite, $L(s)$ has a pole at $s_0$, and has no zero to the right of $s_0$.
\end{lemma}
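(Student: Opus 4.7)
The plan is to deduce this from the classical Landau theorem on Dirichlet series with non-negative coefficients, applied to $D(s):=\log L(s)$.

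The assertion that $L(s)$ has no zero to the right of $s_0$ is immediate from the definition of $s_0$: the union of the half-planes appearing in the infimum is exactly $\Ree(s) > s_0$, so $L$ is holomorphic and non-vanishing on that half-plane.

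For the pole at $s_0$, I consider $D(s)=\log L(s)$. By hypothesis, on some right half-plane of absolute convergence one has $D(s) = \sum_{n \ge 1} a_n n^{-s}$ with $a_n \ge 0$; let $\sigma_c$ denote its abscissa of convergence. Since $L$ is holomorphic and non-vanishing on the simply connected region $\Ree(s) > s_0$, $\log L$ admits an unambiguous holomorphic branch there, which by analytic continuation agrees with $D(s)$ wherever the series converges. Hence $D$ extends holomorphically to all of $\Ree(s) > s_0$, giving $\sigma_c \le s_0$; conversely, $\sigma_c < s_0$ would make $L = e^{D}$ holomorphic and non-vanishing on $\Ree(s) > \sigma_c$, contradicting the definition of $s_0$. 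So $\sigma_c = s_0$.

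Finally I restrict to real $s$. Since $a_n \ge 0$ and $n^{-s}$ decreases in $s$, $D(s)$ is monotone non-increasing on the real axis, so $D(s)$ has a monotone limit $L^\ast \in (-\infty, +\infty]$ as $s \to s_0^+$. The classical Landau theorem (proved by Taylor-expanding $D$ around a real point $\sigma > s_0$ and using non-negativity of the Taylor coefficients to push convergence past $\sigma_c$) asserts that a Dirichlet series with non-negative coefficients cannot be continued analytically past its abscissa of convergence. If $L^\ast$ were finite, then by monotone convergence $\sum a_n n^{-s_0}=L^\ast$ would converge, so $\sigma_c \le s_0$, and the same Taylor argument applied slightly to the right of $s_0$ would in fact push $\sigma_c$ strictly below $s_0$, contradicting $\sigma_c = s_0$. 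Therefore $L^\ast = +\infty$, so $L(s)=e^{D(s)} \to \infty$ as $s \to s_0^+$; granting the meromorphic continuation of $L$ past $s_0$ that is present in the intended applications, this forces a genuine pole. The only non-routine input is Landau's classical theorem itself, which is textbook; the rest is organisational, tying the behaviour of $L$ on $\Ree(s) > s_0$ to that of its Dirichlet-series logarithm.
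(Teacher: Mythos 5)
Your overall strategy --- reduce to Landau's classical theorem applied to $D(s)=\log L(s)$, and identify the abscissa of convergence $\sigma_c$ of $D$ with $s_0$ --- is sound, and the first two steps (no zero to the right of $s_0$; and $\sigma_c=s_0$) are correct. The gap is in the final paragraph, in the claim that $L^\ast<\infty$ leads to a contradiction. You argue that if $L^\ast$ is finite then $\sum a_n n^{-s_0}$ converges, and that ``the same Taylor argument applied slightly to the right of $s_0$ would push $\sigma_c$ strictly below $s_0$.'' That step is not valid. Landau's Taylor-expansion argument needs $D$ to be holomorphic on a disc about some real $\sigma>s_0$ of radius exceeding $\sigma - s_0$, i.e.\ it needs $D$ to already extend holomorphically past $s_0$; mere convergence of $\sum a_n n^{-s_0}$ does not give this. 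Indeed, a Dirichlet series with non-negative coefficients can converge at its abscissa of convergence and still be singular there (e.g.\ $\sum_{n\ge 2} n^{-s}(\log n)^{-2}$ has $\sigma_c=1$ and converges at $s=1$), so ``converges at $s_0$'' cannot by itself imply $\sigma_c<s_0$.

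The missing step is to invoke the meromorphy of $L$ at $s_0$ --- which you correctly note is an implicit hypothesis --- one stage earlier: if $L^\ast<\infty$, then $L(s)=e^{D(s)}\to e^{L^\ast}\in(0,\infty)$ as $s\to s_0^+$, so a meromorphic $L$ must be holomorphic and non-vanishing at $s_0$; hence $\log L$, and with it $D$, extends holomorphically to a neighbourhood of $s_0$, and only \emph{now} does Landau's theorem contradict $\sigma_c=s_0$. Equivalently and a bit more cleanly: once $\sigma_c=s_0$ is known, Landau's theorem says $D$ has a singularity at $s_0$; a meromorphic $L$ with neither pole nor zero there would make $D=\log L$ holomorphic at $s_0$, a contradiction, while $L(s_0)=0$ would force $D(s)\to-\infty$ along the reals, contradicting $D(s)\ge 0$ for real $s$ (note also that the monotone limit satisfies $L^\ast\in[0,+\infty]$, not $(-\infty,+\infty]$, since $a_n\ge 0$). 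Hence $L$ must have a pole at $s_0$. The paper states this lemma without proof, so there is no in-text argument to compare against; the above is a self-contained check of your write-up.
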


In other words, for such an $L(s)$ of positive type, when we approach $s=0$ on the real line from the right, from a real point of absolute convergence, we will not hit a zero of $L(s)$ until we hit a pole.

\medskip

We will also need a suitable (weaker) bound towards the Generalized Ramanujan Conjecture for $\GL(2)$,
which asserts temperedness of $\pi_v$ everywhere, i.e., that $L(s, \pi_v)$ has no poles on
$\Ree(s) > 0$ for all $v$. We will need it for any finite solvable extension $K$ of $F$.

\begin{theorem} \label{GRC-bound}
Let $\pi$ be an isobaric automorphic representation of GL$_2(\A_K)$, for a number field $K$. 
\begin{enumerate}
\item[(a)]If $\pi$ is (unitary) cuspidal, there exists a $\delta < \frac 14$ such that, for any place $v$ of $K$, $L(s, \pi_v)$ has no pole for $\Ree(s) > \delta$.
\item[(b)]If $\pi$ is an isobaric sum of unitary Hecke characters of $K$, then $\pi$ is tempered, and so for any place $v$ of $K$, $L(s, \pi_v)$ has no pole for $\Ree(s) > 0$.
\end{enumerate}
\end{theorem}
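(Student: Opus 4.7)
The plan is to treat (b) by unwinding the definitions and (a) by invoking known bounds toward the Generalized Ramanujan Conjecture for $\GL(2)$.

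For (b), write $\pi = \chi_1 \boxplus \cdots \boxplus \chi_r$ with each $\chi_i$ a unitary Hecke character of $K$. Then at every place $v$, $L(s, \pi_v) = \prod_i L(s, \chi_{i,v})$. Each factor is either trivial, a Gamma factor holomorphic for $\Ree(s) > 0$, or, at unramified finite $v$, of the form $(1 - \chi_{i,v}(\varpi_v) q_v^{-s})^{-1}$ with $|\chi_{i,v}(\varpi_v)| = 1$; in the last case the poles lie on $\Ree(s) = 0$. Hence $L(s, \pi_v)$ is holomorphic in $\Ree(s) > 0$, which is precisely temperedness.

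For (a), I would start from the Jacquet--Shalika bound: for any unitary cuspidal $\tau$ of $\GL_n(\A_K)$ and any place $v$, the local Langlands parameters of $\tau_v$ have exponents whose real parts are strictly less than $1/2$ in absolute value. Applied directly to $\pi$ this yields only $\delta \leq 1/2$. To bootstrap, invoke the Gelbart--Jacquet symmetric-square lift: $\Sym^2 \pi$ exists as an isobaric automorphic representation of $\GL_3(\A_K)$, and if the parameter real parts of $\pi_v$ are $t_v, -t_v$, those of $\Sym^2 \pi_v$ are $2 t_v,\, 0,\, -2 t_v$. A quantitative refinement of Jacquet--Shalika on $\GL_3$ applied to $\Sym^2 \pi$---for instance the Luo--Rudnick--Sarnak bound $|t'_{i,v}| \leq \tfrac{1}{2} - \tfrac{1}{n^2+1} = \tfrac{2}{5}$ at $n = 3$---then forces $|2 t_v| \leq 2/5$, giving the uniform bound $|t_v| \leq 1/5 < 1/4$. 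One could instead cite the sharper bounds of Kim--Shahidi ($\tfrac{1}{9}$, via cuspidality of $\Sym^4 \pi$) or Kim--Sarnak ($\tfrac{7}{64}$), with Blomer--Brumley handling arbitrary number fields; any of these yields a uniform $\delta < 1/4$.

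The main obstacle is uniformity in $v$. The bare strict inequality $< 1/2$ of Jacquet--Shalika, even pushed through $\Sym^2$, only delivers a pointwise strict $|t_v| < 1/4$ at each $v$, not a single $\delta < 1/4$ separating these quantities away from $1/4$ across all places. Supplying that uniform quantitative gap is exactly what the results of LRS, Kim--Shahidi, or Kim--Sarnak provide, and this is where the real content of the theorem lies.
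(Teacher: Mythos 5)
Your approach matches the paper's: part (b) is a direct unwinding, and part (a) is established by invoking the known quantitative bounds toward Ramanujan for $\GL(2)$ --- the paper simply cites Gelbart--Jacquet for $\delta = \tfrac14$, then Kim--Shahidi ($\tfrac19$), Kim--Sarnak and Blomer--Brumley ($\tfrac{7}{64}$) for the strict inequality, and you correctly identify that the whole content of the statement is the uniform gap that these quantitative results supply. Two details in your $\Sym^2$ + Luo--Rudnick--Sarnak sketch need to be flagged before it is airtight. First, $\Sym^2\pi$ is cuspidal on $\GL_3$ only when $\pi$ is \emph{not} dihedral; in the dihedral case $\Sym^2\pi$ is an isobaric sum, so LRS does not apply directly --- but then the Satake parameters of $\pi$ have absolute value $1$, so $\pi$ is already tempered, and this (easy) case should be noted separately. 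Second, LRS and the other cited bounds are stated at places where $\pi_v$ is unramified (and historically first at finite places), so one needs a reduction to the unramified situation to get the claim ``for any place $v$''; the paper supplies exactly this in the Remark immediately following the theorem, observing that for unitary cuspidal $\pi$ on $\GL(2)$ a non-tempered $\pi_v$ is forced to be a unitary character twist of an unramified principal series, and the local twisting character can be globalized so that the bound at the twisted unramified place transfers back. With those two additions your argument is complete and coincides with the paper's route.
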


The fact that $L(s, \pi_v)$ has no pole for $\Ree(s) \ge \frac 14$ is originally due to Gelbart--Jacquet
\cite{gelbart-jacquet} using $\Sym^2$ $L$-functions.
Subsequently, more precise bounds of $\delta = \frac 19$ were given by Kim--Shahidi
\cite{kim-shahidi},
and later $\delta = \frac 7{64}$ by Kim--Sarnak \cite{kim-sarnak} and Blomer--Brumley
\cite{blomer-brumley}, using respectively the $\Sym^3$ and $\Sym^4$ $L$-functions of $\pi$.

\begin{remark} Some of the estimates towards the Generalized Ramanujan Conjecture
cited above
are just stated for $v$ such that $\pi_v$ unramified.  However, for {\it unitary} cuspidal $\pi$
on $\GL(2)/F$, the general case easily reduces to the unramified situation.
Indeed, if $\pi_v$ is not tempered, we may write it as an irreducible
principal series $\pi_v = \pi(\mu_1 | \det |^t, \mu_2 |\det |^{-t})$, where $\mu_1$ and $\mu_2$
are unitary, and $t$ is real and non-zero.
Since $\check \pi_v \simeq \bar \pi_v$ by unitarity, we see that $\{ \mu_1^{-1} |\det|^{-t}, \mu_2^{-1}
|\det |^t \} = \{ \bar \mu_1 | \det |^t, \bar \mu_2 |\det|^{-t} \}$.  Thus $\mu_1^{-1} = \bar \mu_2 =
\mu_2^{-1}$, i.e., $\mu_1 = \mu_2$.  Hence, $\pi_v = \pi(|\det |^{t}, |\det|^{-t}) \otimes \mu_1$
is just a unitary twist of an unramified principal series. Moreover, as $\mu_1$ is a finite order character times $\vert \cdot \vert^{ix}$ for some $x\in \R$, we may choose a global unitary character $\lambda$ of (the idele classes of) $F$ such that $\lambda_v=\mu_1$, and so $\pi\otimes \lambda^{-1}$ is unitary cuspidal with its $v$-component unramified, resulting in a bound for $t$.
\end{remark}

The next result we need follows from Lemma 4.5 and the proof of Proposition 5.3 in
\cite{ramakrishnan}.

\begin{prop} \label{tcheb-prop}
Fix any $r \ge 1$.  There exists a finite solvable extension $K/k$ containing $F$ such that
any prime $w$ of $K$ lying over a degree $p$ prime $v$ of $F/k$ has degree $\ge p^r$
over $k$.
\end{prop}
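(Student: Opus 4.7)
The plan is to reduce the problem to a group-theoretic condition on a Galois group and then realize the required structure explicitly by a Kummer--cyclotomic construction.

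Suppose $K/k$ is finite Galois with group $G$ and $H = \mathrm{Gal}(K/F) \subset G$ of index $p$. For an unramified prime $\mathfrak{p}$ of $k$ inert in $F/k$, the Frobenius $\sigma = \mathrm{Frob}_{\mathfrak{p}} \in G$ projects to a generator of $G/H \cong \Z/p$, and any prime $w$ of $K$ above $\mathfrak{p}$ has $f_{w/k} = \mathrm{ord}(\sigma)$. By Chebotarev every $\sigma \in G \setminus H$ arises in this way up to conjugacy, so the desired conclusion is equivalent to the purely group-theoretic statement: every $\sigma \in G \setminus H$ has order at least $p^r$ in $G$.

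The simplest model satisfying this is $G = \Z/p^r$ with $H$ its unique index-$p$ subgroup, since then $G \setminus H$ consists exactly of the generators of $G$, all of order $p^r$. Thus it suffices to construct a cyclic $\Z/p^r$-extension of $k$ containing $F$, possibly after a preliminary solvable base change. A concrete candidate is obtained as follows: take $k' = k(\zeta_{p^r})$, abelian over $k$; assuming $F \not\subset k'$ (the opposite case is trivial, as $F$ then already lies in the cyclotomic tower of $k$), the compositum $Fk'/k'$ is cyclic of degree $p$, so by Kummer theory $Fk' = k'(\alpha^{1/p})$ for some $\alpha \in (k')^{\times}$. Adjusting $\alpha$ by $p$-th powers to give it full order $p^r$ in $(k')^{\times}/((k')^{\times})^{p^r}$, set $K = k'(\alpha^{1/p^r})$: this is cyclic of degree $p^r$ over $k'$, contains $F$, and the tower $k \subset k' \subset K$ is solvable.

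The main technical obstacle is the local verification. For the residue degree bound one needs every prime $\mathfrak{p}$ of $k$ inert in $F/k$ to have all primes $\mathfrak{p}'$ above it in $k'$ remain inert in $Fk'/k'$, so that the Frobenius at a prime $w$ of $K$ above $\mathfrak{p}'$ attains the full order $p^r$ in $\mathrm{Gal}(K/k') \cong \Z/p^r$. Locally this requires $F_v \not\subset k'_{\mathfrak{p}'}$, which can fail at primes where the cyclotomic extension $k'/k$ accidentally absorbs the unramified degree-$p$ extension $F_v/k_{\mathfrak{p}}$. Handling these exceptional primes uniformly in $\mathfrak{p}$ is the crucial step, and is precisely the content of Lemma 4.5 of \cite{ramakrishnan}, which supplies a Chebotarev-style refinement adjoining further auxiliary abelian extensions of $k$ to avoid such collisions.
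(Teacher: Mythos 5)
Your group-theoretic reduction is correct (for $K/k$ Galois and unramified primes): the condition is that every element of $\mathrm{Gal}(K/k)$ lying outside $\mathrm{Gal}(K/F)$ have order $\ge p^r$. And the instinct to realize a cyclic $\Z/p^r$-extension via Kummer theory over $k' = k(\zeta_{p^r})$ is natural. But your construction has a genuine gap, and the way you try to paper over it does not actually repair it.

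The problem is exactly the one you flag, and it is not a stray finite exception. Once you pass to $k'$, you only control $\mathrm{Gal}(K/k')$, so for a prime $w$ of $K$ over an inert $\mathfrak{p}$ of $k$ you only get $f_{w/k} = f_{w/k'}\cdot f_{\mathfrak{p}'/k}$ where $\mathfrak{p}' = w\cap k'$. If $p \mid f_{\mathfrak{p}'/k}$, then $\mathfrak{p}'$ is \emph{split} in $Fk'/k'$, so $\mathrm{Frob}_{\mathfrak{p}'}$ lands in $\mathrm{Gal}(K/Fk')$ and is no longer forced to generate $\mathrm{Gal}(K/k')\cong\Z/p^r$. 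By Chebotarev a positive density of such $\mathfrak{p}'$ split completely in $K/k'$, and for those primes $f_{w/k} = f_{\mathfrak{p}'/k} \le [k':k]$, which can be far below $p^r$. Since $[k(\zeta_{p^r}):k]$ is typically divisible by $p$ once $r\ge 2$, this bad set has positive density, so no single Kummer extension of $k'$ can work. Your description of the fix --- ``adjoining further auxiliary abelian extensions of $k$ to avoid such collisions'' --- misdiagnoses the issue as a collision at isolated primes that could be sidestepped one at a time; it cannot be, since the bad set is infinite.

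The actual resolution is a compositum argument, which you can see in the $p=2$, $r=2$ special case worked out concretely in the paper's proof of Proposition \ref{prop2}: there, instead of one cyclic $p$-power tower over one auxiliary field, one builds cyclic degree-$p^2$ extensions $K_1/F_1$ and $K_2/F_2$ over \emph{two different} quadratic subfields of the biquadratic field $E = F(i)$ and sets $K = K_1K_2$, arranged so that every prime of $E$ over an inert prime of $F/k$ is degree $2$ over $F_1$ or over $F_2$ (this dichotomy is \cite[Lemma 5.8]{ramakrishnan}) and hence is caught inertly by at least one of the two towers. The paper's own ``proof'' of Proposition \ref{tcheb-prop} is a citation to Lemma 4.5 and the proof of Proposition 5.3 of \cite{ramakrishnan}, where that compositum-over-several-subfields idea is carried out for general $p$ and $r$. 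That compositum step, which ensures every bad prime for one tower is good for another, is the real content of the proposition, and your proposal does not engage with it.
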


The three results above will be used to show $\pi$ is tempered.
In fact, a more explicit version of Proposition \ref{tcheb-prop} for
quadratic fields is used in the other two parts of the proof as well: showing
$L(s,\rho)$ is entire, and showing $\rho$ is modular.
For deducing modularity, we will need the following
version of the converse theorem \`a la Jacquet--Langlands for 
2-dimensional Galois representations.

Denote by $L^*(s, \rho)$ and $L^*(s, \pi)$ the completed $L$-functions for $\rho$ and
$\chi$.

\begin{theorem}\cite[Theorem 12.2]{Jacquet-Langlands} \label{converse}
Suppose $K/F$ is a finite Galois extension and $\rho$ is a 2-dimensional representation of the
Weil group $W_{K/F}$. 
If the $L$-functions $L^*(s, \rho \otimes \chi)$ are entire and bounded
in vertical strips for all idele class characters $\chi$ of $F$, then $\rho$ is modular.
\end{theorem}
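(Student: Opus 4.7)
The plan is to deduce this result from Weil's classical converse theorem for $\GL(2)$, adapted to the representation-theoretic setting: build a candidate automorphic $\pi$ place-by-place from $\rho$ using the local Langlands correspondence, then use the global analytic hypothesis to verify that $\pi$ is genuinely a cusp form on $\GL_2(\A_F)$.

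First, for each place $v$ of $F$, I would restrict $\rho$ to a decomposition group to obtain a two-dimensional representation $\rho_v$ of $W_{F_v}$, and attach to it an irreducible admissible representation $\pi_v$ of $\GL_2(F_v)$ via the local Langlands correspondence for $\GL(2)$. This correspondence is exactly the content of the local theory developed earlier in \cite{Jacquet-Langlands}, and is engineered so that for every local character $\chi_v$ of $F_v^\times$ both
\[
L(s, \pi_v \otimes \chi_v) = L(s, \rho_v \otimes \chi_v) \quad \text{and} \quad \varepsilon(s, \pi_v \otimes \chi_v, \psi_v) = \varepsilon(s, \rho_v \otimes \chi_v, \psi_v).
\]
Assembling these local pieces into the restricted tensor product $\pi = \otimes'_v \pi_v$ (using that $\rho_v$ is unramified for almost all $v$) produces an irreducible admissible representation of $\GL_2(\A_F)$ whose global $L$- and $\varepsilon$-factors match those of $\rho$ after any idele class character twist.

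Next, I would appeal to the Weil--Jacquet--Langlands converse theorem for $\GL(2)$: if, for every idele class character $\chi$ of $F$, the completed $L$-function $L^*(s, \pi \otimes \chi)$ is entire, bounded in vertical strips, and satisfies the expected functional equation, then $\pi$ is cuspidal automorphic. The first two conditions are exactly the hypothesis, once local factors are matched. For the functional equation on the Galois side, one uses that $\rho$ factors through the finite group $\mathrm{Gal}(K/F)$, so by Brauer induction $L^*(s, \rho \otimes \chi)$ can be written as an integer combination of completed abelian Hecke--Tate $L$-functions, each of which satisfies the expected functional equation from Tate's thesis; compatibility of the Langlands--Deligne local root numbers then ensures the global $\varepsilon$-factors agree, place-by-place, with the automorphic ones.

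The main obstacle I anticipate is ensuring that the global analytic hypothesis on $L^*(s, \rho \otimes \chi)$ transfers faithfully, place-by-place, into the converse-theorem hypothesis for $L^*(s, \pi \otimes \chi)$. At unramified $v$ this is automatic from the Satake parametrization, but at the ramified finite and archimedean places the preservation of local $L$- and $\varepsilon$-factors requires the full strength of the local correspondence for $\GL(2)$---precisely what the earlier portion of \cite{Jacquet-Langlands} supplies. Once that local compatibility is in hand, the converse theorem produces an automorphic realization of $\pi$; entireness of $L^*(s, \pi)$ then rules out an Eisenstein component, so $\pi$ is cuspidal, and the equality $L(s, \pi) = L(s, \rho)$ is the desired modularity of $\rho$.
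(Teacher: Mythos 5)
The paper does not prove this theorem itself; it cites it directly from Jacquet--Langlands, Theorem 12.2. Your proposal faithfully reconstructs the argument in that reference: build $\pi_v$ locally from $\rho_v$ via the $\GL(2)$ local correspondence established in the earlier chapters of Jacquet--Langlands so that $L$- and $\varepsilon$-factors of twists match, obtain the global functional equations on the Galois side through Brauer induction and Tate's thesis, and then feed these analytic properties into the Weil-type converse theorem (their Theorem 11.3) to conclude that $\pi = \otimes'_v \pi_v$ is cuspidal automorphic. This is essentially the same proof as in the cited source, so no comparison with a distinct argument in the present paper is needed.

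One small remark on your closing sentence: the converse theorem as used here directly yields cuspidality rather than producing a potentially Eisenstein $\pi$ that one then "rules out"; if some twist $\pi\otimes\chi$ contributed to the continuous spectrum, its completed $L$-function would have poles, contradicting the hypothesis. This is a phrasing issue rather than a gap, and your identification of the main technical burden — matching local $L$- and $\varepsilon$-factors of ramified twists at bad and archimedean places — is exactly where the substance of Jacquet--Langlands's local theory is invoked.
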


In our case of
$\rho$ being an Artin representation, it follows from a theorem of Brauer that each
$L^*(s,\rho \otimes \chi)$ is a ratio of entire functions of finite order.  
Thus knowing $L^*(s,\rho \otimes \chi)$ is entire implies it is of finite order,
whence bounded in vertical strips.  Hence we will only need to check entireness to use
this converse theorem.  

We remark Booker and Krishnamurthy proved a converse theorem
\cite{booker-krishnamurthy} requiring only a weaker hypothesis.

\medskip
We say the global representations $\rho$ and $\pi$ correspond if they do in the sense
of the strong Artin conjecture, i.e., that their local $L$-factors agree almost everywhere.  
For $\GL(2)$, we show that this type of correspondence implies the stronger conclusion that
we assert in Theorem A, that 
$\rho_v$ and $\pi_v$ are associated by the local Langlands correspondence at all $v$.
The local Langlands correspondence for $\GL(2)$ was established by Kutzko \cite{kutzko},
and is characterized uniquely by matching of local $L$- and $\epsilon$- factors of twists
by finite-order characters of $\GL(1)$ (cf.\ \cite[Corollary 2.19]{Jacquet-Langlands}).

\begin{prop} \label{prop:LLC}
Suppose that $L(s, \rho_v) = L(s, \pi_v)$ for almost all $v$.  
Then $\rho_v$ and $\pi_v$ correspond 
in the sense of local Langlands at all $v$ (finite and infinite).
\end{prop}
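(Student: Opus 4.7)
My approach is to invoke Kutzko's characterization of the local Langlands correspondence for $\GL(2)$ recalled in the excerpt: to prove the proposition, it suffices to show, for every place $v$ of $F$ and every finite-order character $\chi_v$ of $F_v^\times$, that
\begin{equation*}
L(s,\rho_v\otimes\chi_v)=L(s,\pi_v\otimes\chi_v) \text{ and } \epsilon(s,\rho_v\otimes\chi_v,\psi_v)=\epsilon(s,\pi_v\otimes\chi_v,\psi_v).
\end{equation*}
Fix such $v$ and $\chi_v$. The plan is a global functional-equation argument in the style of \cite[Theorem 4.6]{deligne-serre} (cf.\ Appendix A of \cite{martin}). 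By Grunwald--Wang, I would globalize $\chi_v$ to a finite-order idele class character $\chi$ of $F$ with $v$-component $\chi_v$ and unramified outside a chosen finite set $S$ containing $v$ and the finite ``bad set'' where the given agreement $L(s,\rho_w)=L(s,\pi_w)$ might fail. Then at every $w\notin S$ all three of $\rho_w$, $\pi_w$, $\chi_w$ are unramified and the twisted local $L$-factors automatically agree.

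Next, the completed $L$-functions $\Lambda(s,\rho\otimes\chi)$ and $\Lambda(s,\pi\otimes\chi)$ both admit meromorphic continuation and a standard functional equation --- via Brauer on the Galois side and via Jacquet--Langlands on the automorphic side. Dividing the two functional equations, all Euler factors at $w\notin S$ cancel identically, leaving an identity purely among the local $L$- and $\epsilon$-factors at the finitely many places of $S$, paired under $s\mapsto 1-s$.

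To isolate the place $v$, I would enlarge $S$ as needed and vary the components $\chi_w$ at $w\in S\setminus\{v\}$: taking $\chi_w$ sufficiently ramified at a finite $w\ne v$ makes the local $L$-factors on both sides trivial and, by the stability of $\epsilon$-factors under deep twists, reduces the local $\epsilon$-factors to explicit expressions depending only on $\chi_w$ and on the central characters, which agree globally ($\det\rho=\omega_\pi$) by the opening paragraph of Section 1. The contributions at $w\ne v$ then cancel on both sides, forcing the desired identity of local $L$- and $\epsilon$-factors at $v$. I expect the main obstacle to be this isolation step when $v$ or another place of $S$ is archimedean, since at archimedean places the supply of finite-order characters is minimal (trivial or sign at a real place, trivial at a complex place) and one cannot simply ``ramify away'' an archimedean contribution; there the explicit classification of archimedean local $L$- and $\epsilon$-factors, combined with the unitarity of $\pi_w$ and the boundedness of the Artin $\rho_w$, substantially restricts the possibilities and allows one to conclude by direct case analysis. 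Once all twisted local identities hold, Kutzko's characterization yields the asserted correspondence at every place.
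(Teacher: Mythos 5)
Your proposal is correct and follows essentially the same route as the paper: globalize via Grunwald--Wang, compare functional equations on both sides, use Jacquet--Shalika stability under highly ramified twists (together with the equality $\det\rho=\omega_\pi$) to cancel the contributions at the other bad places, and handle the archimedean places by a direct case analysis of the possible $\Gamma_\R$/$\Gamma_\C$ factors. The one refinement the paper makes explicit, and which you flag as the main obstacle, is how to isolate a single archimedean place: it first deduces the total archimedean equality $L_\infty(s,\pi)=L_\infty(s,\rho)$ from a pole-location argument, then separates individual real places by twisting by $\sgn$ at one place at a time.
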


\begin{proof}
This is a refinement of the argument in Theorem 4.6 of \cite{deligne-serre} and \cite[Appendix A]{martin}.

At a finite place $v$ where $\rho_v$ and $\pi_v$ are both
unramified, this is immediate as $\rho_v$ and $\pi_v$ are determined by $L(s, \rho_v)$
and $L(s, \pi_v)$.  So we only need to show this for $v | \infty$ and $v \in S$, where $S$
is the set of nonarchimedean places $v$ at which $\rho$ or $\pi$ is ramified or 
$L(s, \rho_v) \ne L(s,\pi_v)$.

Observe that, by class field theory, $\det \rho$ corresponds to an idele class character 
$\omega$ over $F$.
From $L(s, \rho_v) = L(s, \pi_v)$, we know $\omega_v = \omega_{\pi_v}$ for all finite $v \not \in S$, and therefore $\omega = \omega_\pi$ by Hecke's strong multiplicity one for $\GL(1)$.  That is to say, 
$\det \rho$ and $\omega_\pi$ correspond via class field theory.

We will use the fact that if $T$ is a finite set of places and $\mu_v$, $v \in T$, are finite-order
characters, there exists a finite-order idele class character $\chi$ globalizing the $\mu_v$'s, i.e., 
$\chi_v = \mu_v$ for $v \in T$.  This is a standard application of the Grunwald--Wang theorem.

First we establish the local Langlands correspondence for $v | \infty$.
Choose a finite-order idele class character $\chi$ which is highly ramified at each $u \in S$
and trivial at each infinite place.  Then, for every $u < \infty$, 
we have $L(s, \rho_u \otimes \chi_u) = L(s, \pi_u \otimes \chi_u)$.  Consequently,
comparing the functional equations for $L(s, \rho \otimes \chi)$ and $L(s, \pi \otimes \chi)$
gives
\begin{equation} \label{eq:fe-ratio}
 \frac{L_\infty(1-s, \bar \pi)}{L_\infty(1-s, \bar \rho)} = 
\frac{\epsilon(s, \pi \otimes \chi)}{\epsilon(s, \rho \otimes \chi)} \frac{L_\infty(s, \pi)}{L_\infty(s, \rho)}.
\end{equation}
Since the poles of the $L$-factors on the right hand side of \eqref{eq:fe-ratio} lie in
$\Ree(s) < \frac 12$, whereas the poles of the $L$-factors on the left hand side lie in
$\Ree(s) > \frac 12$, we can conclude $L_\infty(s, \pi) = L_\infty(s, \rho)$ as they 
must have the same poles (cf.\ \cite[Appendix A]{martin}). 

Note that for $v | \infty$, $\rho_v$ must be a direct sum of two characters with are $1$ or
$\sgn$.  If $v$ is complex, of course $\rho_v = 1 \oplus 1$.  Consequently
\[ L_\infty(s, \pi) = L_\infty(s, \rho) = \Gamma_{\R}(s)^{a} \Gamma_{\R}(s+1)^{b} \]
for some non-negative integers $a, b$.

We claim that, at any archimedean place $v$, the local factors of $\pi$ and
$\rho$ agree. Suppose not. Let $v$ be an errant place, which cannot be complex
since the only option is 
$\Gamma_\C(s)^2 = \Gamma_\R(s)^2\Gamma_\R(s+1)^2$ for either factor. So
$v$ is real.  Since the local factors $L(s,\pi_v)$ are $L(s,\rho_v)$ are different by assumption,
at least one of them, say $L(s, \pi_v)$, 
must be of the form $\Gamma_\R(s)^2$ or $\Gamma_\R(s+1)^2$. 
Call this local factor $G_1(s)$, and the other local factor, say $L(s, \rho_v)$, $G_2(s)$. 
Now twist by a finite-order character $\chi$ of $F$
which is sufficiently ramified at the bad finite places in $S$, equals $1$ at the
archimedean places other than $v$, and at $v$ equals $\sgn$. Then $G_1(s)$ (say
$L(s, \pi_v \otimes \sgn)$) becomes
$G_1(s+\delta)$, with $\delta\in\{1, -1\}$.  
Similarly we see that $G_2(s)$ (say $L(s, \rho_v \otimes \sgn)$)  becomes 
$G_2(s+\delta')$ with $\delta' \in \{ 1, 0, -1 \}$, where $\delta' = 0$ if and only if 
$G_2(s)=\Gamma_\R(s) \Gamma_\R(s+1)$.
The total archimedean identity 
$L_\infty(s,\pi \otimes \chi) = L_\infty(s,\rho \otimes \chi)$
persists in this case, and comparing with $L_\infty(s, \pi) = L_\infty(s, \rho)$ gives
$G_1(s+\delta)/G_1(s) = G_2(s+\delta')/G_2(s)$. 
The only way this can happen is if $G_1(s+\delta)=G_2(s+\delta')$, which forces
$G_1(s)=G_2(s)$, contradicting the
assumption. Hence the claim.

This in fact implies that $\pi_v$ is associated to $\rho_v$ by the local Langlands correspondence
for $v$ archimedean.  Namely, for complex $v$, $L(s,\pi_v) = \Gamma_\C(s)$ and $\pi_v$
unitary (or $\omega_{\pi_v} = \det \rho_v = 1$) implies
$\pi_v = \pi(1,1)$.  For real $v$, if $\pi_v = \pi(\mu_1, \mu_2)$ is a unitarizable principal series
then $\mu_i = \sgn^{m_i} | \cdot |^{s_i}$ with $|s_i| < \frac 12$.  Hence $L(s, \mu_i) = \Gamma_\R(s)$ implies $\mu_i = 1$ and $L(s, \mu_i) = \Gamma_\R(s+1)$ implies $\mu_i = \sgn$.  Consequently, if 
$L(s, \pi_v) = \Gamma_\R(s)^c \Gamma_\R(s+1)^d$ with $c+d=2$, then $\pi_v$ is an isobaric
sum of $c$ copies of $1$ and $d$ copies of $\sgn$, and therefore matches $\rho_v$ in the sense
of Langlands.

Finally, consider a finite place $v \in S$.  Let $\mu$ be a finite-order character of $F_v^\times$.  Let $\chi$
be an idele class character which is highly ramified at all $u \in S - \{ v \}$ such that 
$\chi_v = \mu$.  For all $u \not \in S$, we have 
$L(s, \pi_u \otimes \chi_u) = L(s, \rho_u \otimes \chi_u)$ 
and $\epsilon(s, \pi_u \otimes \chi_u, \psi_u) = \epsilon(s, \rho_u \otimes \chi_u, \psi_u)$
by the local Langlands correspondence.  But the same is also true for $u \in S - \{ v \}$ by a result
of Jacquet and Shalika \cite{JS} which is often called stability of $\gamma$-factors:
for twists by sufficiently ramified characters, the $L$-factors are 1 and 
the $\epsilon$-factors are equal since $\omega_{\pi_u} = \det_{\rho_u}$.
Hence the same comparison of functional equations that led to \eqref{eq:fe-ratio} in the 
archimedean case gives us
\begin{equation}
\frac{L(1-s, \bar \pi_v \otimes \bar \mu)}{L(1-s, \bar \rho_v \otimes \bar \mu)}
= \frac{\epsilon(s, \pi_v \otimes \mu, \psi_v)}{\epsilon(s, \rho_v \otimes \mu, \psi_v)}
\frac{L(s, \pi_v \otimes \mu)}{L(s, \rho_v \otimes \mu)}.
\end{equation}
Again, a comparison of the poles implies $L(s, \pi_v \otimes \mu) = L(s, \rho_v \otimes \mu)$,
and similarly that $L(1-s, \bar \pi_v \otimes \bar \mu) = L(1-s, \bar \rho_v \otimes \bar \mu)$.
Consequently, $\epsilon(s, \pi_v \otimes \mu, \psi_v) = \epsilon(s, \rho_v \otimes \mu, \psi_v)$.
Since this is true for all $\mu$, we conclude that $\pi_v$ and $\rho_v$ must correspond in the
sense of local Langlands. 
\end{proof}

\section{Temperedness}

To show $\pi$ is tempered, we will make use of solvable base change.

Let $K$ be a solvable extension of $F$.  Denote by $\rho_K$ the restriction of $\rho$ to
$\Gamma_K$.  Denote by $\pi_K$ the base change of $\pi$ to $\GL_2(\A_K)$,
whose automorphy we know by Langlands (\cite{langlands}). More precisely, $\pi_K$ is either cuspidal or else an isobaric sum of two unitary Hecke characters of $K$.

Put
\begin{equation}
\Lambda_K(s) = \frac{L^*(s, \pi_K \times \bar \pi_K)}{L^*(s, \rho_K \times \bar \rho_K)}.
\end{equation}
If there is no confusion, we will write $\Lambda$ instead of $\Lambda_K$.
This has a factorization,
\[ \Lambda(s) = \prod_{v} \Lambda_{v}(s),
\quad \text{where } \Lambda_{v}(s) =  \frac{L(s, \pi_{K,v} \times \bar \pi_{K,v})}
{L(s, \rho_{K,v} \times \bar \rho_{K,v})}, \]
for any place $v$ of $K$, and is {\em a priori}  analytic with no zeroes for $\Ree(s) > 1$.
What is crucial for us is that the logarithms of the numerator and denominator of
the non-archimedean part of $\Lambda$
are Dirichlet series with positive coefficients, so we will be able to apply
Landau's lemma.

For an arbitrary set $S$ of places of $K$, we write
 $\Lambda_{S}(s) = \prod_{v \in S} \Lambda_{v}(s)$.
Denote by $S_j$ the set of finite places $v$ of $K$ of degree $j$ over $k$ for which $\rho_K$,
$\pi_K$ and $K$ are unramified, but $\rho_{K,v}$ and $\pi_{K,v}$ do not correspond.
Then we can write
\begin{equation} \label{eq:Lambda-prod}
 \Lambda(s) = \Lambda_{\Sigma}(s) \prod_{j \ge 2} \Lambda_{S_j}(s),
\end{equation}
where $\Sigma$ is a finite set containing $S_1$, the archimedean places, and
the set of finite places where $\pi$, $\rho$ or $F$ is ramified.

Then $\Lambda(s)$ satisfies a functional equation of the form
\begin{equation} \label{eq:Lambda-FE}
\Lambda(s) = \epsilon(s)   \Lambda(1-s),
\end{equation}
where $\epsilon(s)$ is an invertible holomorphic function on $\C$.

To show $\pi$ is tempered, we will use the following lemma in two different places.

\begin{lemma} \label{tempered-lemma}
Let $\delta$ be as in Theorem \ref{GRC-bound}, and $S_j$ as above for $K=F$.
Then

(i)  $L_{S_j}(s, \pi \times \bar \pi)$ has no poles or zeroes on $\Ree(s) > \frac 1j + 2 \delta$; and

(ii) $L_{S_j}(s, \rho \times \bar \rho)$ has no poles or zeroes on $\Ree(s) > \frac 1j$.
\end{lemma}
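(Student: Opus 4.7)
The plan is to show that in each case the logarithm of the partial Euler product converges absolutely on the stated half-plane. Absolute convergence of $\sum_v \log L(s,\cdot)$, uniformly on compacta, is equivalent to convergence of $\prod_v L(s,\cdot)$ to a nonzero holomorphic limit; so it simultaneously kills poles and zeroes on that half-plane.

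For (i), the starting point is the standard expansion, for unramified $\pi_v$ with Satake parameters $\{\alpha_1,\alpha_2\}$,
\[
\log L(s, \pi_v \times \bar\pi_v) \;=\; \sum_{m\ge 1} \frac{|\alpha_1^m + \alpha_2^m|^2}{m}\, N(v)^{-ms},
\]
a Dirichlet series with manifestly non-negative coefficients. Theorem \ref{GRC-bound}(a) gives $|\alpha_i| \le N(v)^\delta$, so each term is dominated by $\tfrac{4}{m} N(v)^{-m(\Ree(s) - 2\delta)}$, and summing in $m$ one obtains
\[
\bigl|\log L(s, \pi_v \times \bar\pi_v)\bigr| \;\le\; -4\log\!\bigl(1 - N(v)^{2\delta - \Ree(s)}\bigr) \;=\; O\bigl(N(v)^{2\delta - \Ree(s)}\bigr)
\]
as soon as $\Ree(s) > 2\delta$.

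The complementary ingredient is that every $v \in S_j$ is unramified over $k$ of residue degree $j$, so $N(v) = N(v\cap k)^j$; and since at most $p = [F:k]$ primes of $F$ lie above a given prime of $k$,
\[
\sum_{v \in S_j} N(v)^{-c} \;\le\; p \sum_{\mathfrak p \subset \oO_k} N(\mathfrak p)^{-jc} \;\le\; p\,\zeta_k(jc),
\]
which converges for $\Ree(c) > 1/j$. Taking $c = \Ree(s) - 2\delta$ and combining with the per-place bound yields absolute, locally uniform convergence of $\sum_{v\in S_j} \log L(s, \pi_v \times \bar\pi_v)$ on $\{\Ree(s) > 1/j + 2\delta\}$, establishing (i).

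For (ii), the same argument applies but the Frobenius eigenvalues of the Artin representation $\rho$ at any unramified place are roots of unity, hence of absolute value $1$; effectively $\delta$ is replaced by $0$, yielding the threshold $1/j$. There is no isolated hard step -- this is a quantitative bookkeeping lemma -- but the decisive points are the positivity of the logarithmic coefficients (a sum of squared Hecke eigenvalues, so that the naive absolute bound captures the correct abscissa of convergence) and the input of the sub-$1/4$ Ramanujan estimate of Theorem \ref{GRC-bound}(a), without which one could not push the threshold below $1$ in the critical case $j=2$, where $1/j + 2\delta < 1$ is precisely the condition $\delta < 1/4$.
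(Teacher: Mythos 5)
Your proof is correct and follows essentially the same route as the paper: bound the logarithmic Dirichlet series termwise using the Ramanujan estimate $|\alpha_i| \le q_v^{\delta}$ (resp.\ $|\alpha_i|=1$ for $\rho$), exploit $N(v)=N(v\cap k)^j$ for $v\in S_j$, and sum to get convergence on $\Ree(s)>1/j+2\delta$ (resp.\ $>1/j$). The one cosmetic difference is the final step: the paper bounds the log-series for \emph{real} $s$, notes it has non-negative coefficients, and invokes Landau's lemma (Lemma \ref{landau}) to conclude holomorphy and non-vanishing on the half-plane; you instead argue absolute, locally uniform convergence of $\sum_v\log L_v(s)$ directly, so that $L_{S_j}=\exp(\sum\log L_v)$ is automatically holomorphic and zero-free there. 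Both are sound, and yours is if anything slightly more self-contained since it does not route through Lemma \ref{landau}; the positivity of coefficients, which you also flag, is what makes the crude termwise absolute bound give the correct abscissa, which is the same observation the paper is making when it says the product has infinitely many factors so non-vanishing ``is not obvious without applying Landau.''
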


\begin{proof}
Let us prove (i).

Suppose $v \in S_j$.  Let $\alpha_{1, v}$ and $\alpha_{2, v}$ be the Satake parameters for
$\pi_v$ and $\beta_v = \max \{ | \alpha_{1, v} |, | \alpha_{2, v} | \}$.  Then
\begin{equation} \label{eq:Lpiv-bound}
 \log L(s, \pi_v \times \bar \pi_v) \le \sum_{i=1}^2 \sum_{l=1}^2 \sum_{m \ge 1}
\frac{(\alpha_{i,v} \bar \alpha_{l, v})^m}{mq_v^{ms}}
\le 4 \sum_{m \ge 1} \left( \frac{\beta_v^{2}}{q_v^{s}} \right)^m \le 4 \sum_{m \ge 1}
 \frac 1{q_v^{(s-2\delta)m}},
\end{equation}
where the last step follows from the bound in Theorem \ref{GRC-bound}, which
is equivalent to $\beta_v < q_v^\delta$.
In particular, the local factor $L(s, \pi_v \times \bar \pi_v)$ (which is never zero) is holomorphic for $\Ree(s) > 2 \delta$.

Let $p_v^{f_v}$ denote the norm of the prime of $k$ below $v$, where $p_v$ is a rational
prime.  Then from \eqref{eq:Lpiv-bound},
we see
\begin{align*}
 \log L_{S_j}(s, \pi \times \bar \pi) & \le 4 \sum_{v \in S_j} \sum_{m \ge 1} \frac 1{p_v^{f_v j(s-2/9)m}}
  \le 4 \sum_{v \in S_j} \sum_{m \ge 1} \frac 1{p_v^{j(s-2/9)m}}  \\
 & \le 4[k:\Q] \sum_{p_i} \sum_{m \ge 1} \frac 1{(p_i^m)^{j(s-2/9)}} \le 4 \sum_{n \ge 1} \frac 1{n^{j(s-2/9)}},
\end{align*}
where $p_i$ runs over all primes in the penultimate inequality.
This series converges absolutely, and uniformly in compact subsets of the region of $s\in \C$ with
\begin{equation}
\Ree(s) > \frac 1j + 2\delta.
\end{equation}
Since $L_{S_j}(s, \pi \times \bar \pi)$ is of positive type, we may apply Landau's lemma (Lemma \ref{landau}) to conclude that it is also {\it non-zero} and holomorphic for $\Ree(s) > \frac 1j + 2\delta$,
which implies (i). (Since this incomplete $L$-function has infinitely many Euler factors, it is not obvious that it is non-zero in this region without applying Landau.)

The argument for (ii) is the same, except that one uses the fact that the Frobenius
eigenvalues for $\rho_v$ lie on $\Ree(s) = 0$ in place of Theorem \ref{GRC-bound}.
\end{proof}

\begin{prop} \label{tempered-prop}
For each place $v$ of $F$, $\pi_v$ is tempered.
\end{prop}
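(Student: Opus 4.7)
The plan is to combine solvable base change with an analytic comparison. Since $\pi_K$ is either cuspidal or an isobaric sum of unitary Hecke characters of $K$, in the latter case Theorem \ref{GRC-bound}(b) yields temperedness at every place of $K$, which descends to $\pi$; so I may assume $\pi_K$ remains cuspidal for every solvable $K/F$ I consider. The goal is to upgrade the hypothesis of matching at degree-$1$ primes of $F/k$ into matching of $\pi_K$ and $\rho_K$ at almost all primes of a carefully chosen $K$, after which Proposition \ref{prop:LLC} gives $\pi_{K,w}\leftrightarrow\rho_{K,w}$ via local Langlands at every $w$, forcing each $\pi_{K,w}$ to be tempered (since $\rho_{K,w}$ has bounded Frobenius parameters), from which temperedness of $\pi_v$ descends.

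First I choose $r$ so that $1/p^r + 2\delta < 1/2$, possible because $\delta < 1/4$, and invoke Proposition \ref{tcheb-prop} (first passing to $F(\sqrt{-1})$ if convenient) to obtain a finite solvable $K/k$ containing $F$ such that every prime of $K$ lying over a degree-$p$ prime of $F$ has degree at least $p^r$ over $k$. A residue-degree argument then shows $S_j^K = \emptyset$ for $2 \le j < p^r$: such a $w$ must lie over a degree-$1$ prime $v$ of $F$, and the hypothesis $L(s,\pi_v) = L(s,\rho_v)$ is preserved by base change. I then form
\[
  \Lambda_K(s) = \frac{L^*(s,\pi_K \times \bar\pi_K)}{L^*(s,\rho_K \times \bar\rho_K)} = \Lambda_\Sigma(s)\prod_{j \ge p^r}\Lambda_{S_j^K}(s),
\]
with functional equation $\Lambda_K(s) = \epsilon(s)\Lambda_K(1-s)$. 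By Lemma \ref{tempered-lemma} applied over $K$, the tail product is holomorphic and non-vanishing on $\Ree(s) > 1/p^r + 2\delta$, a region which by the choice of $r$ already extends strictly into the left half of the critical strip; meanwhile $\Lambda_\Sigma(s)$ is a finite product whose poles and zeros are explicit.

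Combined with the functional equation, this controls $\Lambda_K$ in a vertical strip straddling $\Ree(s) = 1/2$. The key analytic step is to apply Landau's lemma (Lemma \ref{landau}) to the positive-type Dirichlet series $\log L^*(s,\pi_K \times \bar\pi_K)$ and $\log L^*(s,\rho_K \times \bar\rho_K)$ to force the high-degree tail contributions $\Lambda_{S_j^K}$, $j \ge p^r$, to be trivial outside a finite set, so that $\pi_K$ and $\rho_K$ have matching $L$-factors at almost all places of $K$. Proposition \ref{prop:LLC} then applies over $K$, yielding local Langlands matching at every $w$; since $\rho_{K,w}$ is Frobenius-semisimple with unitary eigenvalues, its local Langlands partner $\pi_{K,w}$ is tempered. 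Finally, for any place $v$ of $F$ and any $w \mid v$ in $K$, temperedness of $\pi_{K,w}$ descends to temperedness of $\pi_v$, because base change of a non-tempered local component preserves the non-zero real part of its Langlands parameter. The main obstacle is the analytic forcing step; the sharp bound $\delta < 1/4$ of Theorem \ref{GRC-bound} is essential, since a weaker exponent would not allow $1/p^r + 2\delta$ to be pushed below $1/2$ and the argument would collapse.
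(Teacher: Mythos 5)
There is a genuine gap at your ``key analytic step.'' You claim that Landau's lemma, applied to $\log L^*(s,\pi_K\times\bar\pi_K)$ and $\log L^*(s,\rho_K\times\bar\rho_K)$, forces the tail factors $\Lambda_{S_j^K}$, $j\ge p^r$, to be trivial outside a finite set, hence that $\pi_K$ and $\rho_K$ have matching local $L$-factors at almost all places of $K$. Landau's lemma does not do this: it controls the location of the first singularity of a \emph{single} positive-type Dirichlet series, and combined with the functional equation and Lemma~\ref{tempered-lemma} it yields that $\Lambda_K$ is holomorphic and non-vanishing on $\Ree(s)\ge\frac12$, hence (via the functional equation) entire and nowhere zero. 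That is all. Two Euler products may have a quotient that is entire and zero-free without their local factors agreeing at any place -- you would essentially be assuming the conclusion of Theorem~A (matching of $\pi_K$ and $\rho_K$ almost everywhere over $K$), which in the paper is established only after the entireness and descent sections, not at this stage. So you cannot invoke Proposition~\ref{prop:LLC} over $K$ from what you have, and the whole downstream chain (local Langlands over $K$, temperedness of $\pi_{K,w}$, descent to $\pi_v$) collapses.

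What the paper actually does after establishing ``$\Lambda_K$ entire and nowhere zero'' is a proof by contradiction on the \emph{pole location}: if $\pi_v$ is non-tempered, one produces, via a single errant Satake parameter of modulus $>1$ and Landau again, a pole of the numerator $L^*(s,\pi_K\times\bar\pi_K)$ at some $s_0>0$ depending only on $\pi_v$ (hence \emph{independent of} $K$). Since $\Lambda_K$ is entire, the denominator $L^*(s,\rho_K\times\bar\rho_K)$ must also have a pole at $s_0$. But the Artin-side local factors are unitary, so Lemma~\ref{tempered-lemma}(ii) bounds the poles of the denominator's tail by $\Ree(s)\le 1/N$, and by enlarging $K$ (increasing $r$) one pushes this bound below $s_0$, a contradiction. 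You should replace your forcing step with this argument. Your preliminary reductions (handling the isobaric case via Theorem~\ref{GRC-bound}(b), choosing $r$ with $1/p^r+2\delta<\frac12$, invoking Proposition~\ref{tcheb-prop}) are in line with the paper; the problem is solely the leap from analyticity of the quotient to equality of local factors.
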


\begin{proof}
By Theorem \ref{GRC-bound}, for any finite set $\Sigma$,  $\Lambda_\Sigma$ has no poles for
$\Ree(s) \ge \frac 12$.

Fix $N$ such that $N > (\frac 12 - 2\delta)^{-1}$.
Then Lemma \ref{tempered-lemma}
implies that $\Lambda_{S_j}(s)$ has no poles (or zeroes) in the region
$\Ree(s) \ge \frac 12$ for $j \ge N$.
Fix $r$ such that $p^r \ge N$.  By Proposition \ref{tcheb-prop},
there is a solvable $K/F$ such that each prime $v$ of degree $p$ in $F/k$
splits into primes all of degree $\ge p^r$ in $K/k$.
Then for all but finitely many primes $v$ of degree $< N$ in $K/k$, we have $\Lambda_{K,v}(s) = 1$,
so in fact we can rewrite \eqref{eq:Lambda-prod} as
\begin{equation} \label{eq:Lambda-prod2}
 \Lambda_K(s) = \Lambda_{K,\Sigma}(s) \prod_{j \ge N} \Lambda_{K,S_j}(s),
\end{equation}
for a finite set $\Sigma$.

Hence Lemma \ref{tempered-lemma} implies
$\Lambda_K$ has no zeroes or poles for $\Ree(s) \ge \frac 12$.
Therefore, by the functional equation \eqref{eq:Lambda-FE},
$\Lambda_K$ is entire and nowhere zero.

Now suppose $\pi_v$ is non-tempered for some place $v$ of $F$.  Then $\pi_v$ is an
irreducible principal series $\pi_v = \pi(\mu_1, \mu_2)$ and $L(s, \pi_v) = L(s, \mu_1)L(s,\mu_2)$,
so $L(s, \pi_v \times \bar \pi_v) = \prod_{i=1}^2 \prod_{j=1}^2 L(s, \mu_i \bar \mu_j)$.

Consider first $v < \infty$.  Write $L(s, \mu_i) = (1-\alpha_{i,v} q_v^{-s})^{-1}$ for $i = 1,2$.
Interchanging $\alpha_{1,2}$ and $\alpha_{2,v}$ if necessary,
we may assume $|\alpha_{1,v}| > 1$.
Then for any place $w$ of $K$ above $v$,
$L(s,\pi_{K,w})^{-1}$ has a factor of the form $(1-\alpha_{1,v}^{f_w} q_w^{-s})$
where $f_w = [K_w:F_v]$.  Hence $L(s, \pi_{K,w} \times \bar \pi_{K,w})^{-1}$ has
$(1-|\alpha_{1,v}|^{2f_w} q_w^{-s})$ as a factor.
Looking at the Dirichlet series $\log L(s, \pi_{K,w} \times \bar \pi_{K,w})$, Landau's lemma
tells us  $L(s, \pi_{K,w} \times \bar \pi_{K,w})$ has a pole at $s_0 =
\frac {2\log |\alpha_{1,v}|}{\log q_v}$.

Now consider $v | \infty$.  We may assume $L(s, \mu_1)$ has a pole to the right of $\Ree(s) = 0$.
Again suppose $w$ is a place of $K$ with $w | v$.  Either
$L(s, \pi_{K,w} \times \bar \pi_{K, w})$ is $L(s, \mu_1 \bar \mu_1)$ (when $v$ splits in $K$) or
it equals $L(s, \mu_{1, \C} \bar \mu_{1, \C})$, where $\mu_{1, \C}(z) = \mu_1(z \bar z)$.  If we choose
$s_0 > 0$ so that both $L(s, \mu_1 \bar \mu_1)$ and $L(s, \mu_{1,\C} \bar \mu_{1,\C})$ have
poles in $\Ree(s) \ge s_0$, then the same is true for $L(s, \pi_{K,w} \times \bar \pi_{K, w})$.

Hence, in either case $v < \infty$ or $v | \infty$, there exists $s_0 > 0$, independent of $K$,
such that $L(s, \pi_{K,w} \times \bar \pi_{K,w})$ has a pole in $\Ree(s) \ge s_0$.
Another application of Landau's lemma tells us
$L_\Sigma(s, \pi_K \times \bar \pi_K) \prod_{j \ge N} L_{S_j}(s, \pi_K \times \bar \pi_K)$
must also have a pole in $\Ree(s) \ge s_0$.

Since $\Lambda_K$ is entire, it must be that the denominator
$L_\Sigma(s, \rho_K \times \bar \rho_K) \prod_{j \ge N} L_{S_j}(s, \rho_K \times \bar \rho_K)$
of $\Lambda_K(s)$ also has a pole at $s_0$.
However, $L_\Sigma(s, \rho_K \times \bar \rho_K)$
has no poles
to the right of $\Ree(s) = 0$, and by Lemma \ref{tempered-lemma},
$L_{S_j}(s, \rho_K \times \bar \rho_K)$
has no poles to the right of $\Ree(s) = \frac 1j$. Hence $s_0 \le \frac 1j$.

Take $r_0 \ge r$ such that $p^{r_0} > \frac 1{s_0}$.   By Proposition \ref{tcheb-prop},
we may replace $K$ by a larger
solvable extension so that every prime of degree $p$ in $F$ splits into primes of
degree at least $p^{r_0}$ in $K$.  But then the denominator of $\Lambda_K(s)$ has no poles
to the right of $p^{-r_0} < s_0$, a contradiction.
\end{proof}

\section{Entireness}

Now we would like to deduce that $L(s, \rho)$ is entire.  However, we cannot
directly do this when $F/k$ is quadratic, but only over a
quadratic or biquadratic extension $K$.  Consequently we deduce that
$\rho$ and $\pi$ correspond over $K$.  In the final section, we will deduce that
$\rho$ and $\pi$ correspond over $F$, which will imply the entireness of $L(s,\rho)$.
From here on, $i$ will denote a primitive fourth root of $1$ in $\overline F$.

\begin{prop} \label{prop2}
There exists a solvable extension $K/k$ containing $F$, depending
only on $F/k$, such
that $L^*(s, \rho_K)$ is entire.  In fact, if $p=[F:k] > 2$, we can take $K=F$.  If
$p = 2$ and $i \in F$, then we can take $K/k$ to be cyclic of degree 4.
If $p=2$ and $i \not \in F$, then we can take $K/k$ so that $K/F$ is biquadratic.
\end{prop}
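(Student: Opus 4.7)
The plan is to apply the functional-equation and domain-of-convergence trick used in the proof of Proposition~\ref{tempered-prop} to the ratio
\[
 H_K(s) := \frac{L^*(s, \pi_K)}{L^*(s, \rho_K)},
\]
exploiting the now-established temperedness of $\pi$. At every finite place $w$ of $K$ lying over a degree-one prime $v$ of $F/k$, the hypothesis $L(s, \pi_v) = L(s, \rho_v)$ and base-change compatibility give $L(s, \pi_{K, w}) = L(s, \rho_{K, w})$, so the local factor of $H_K$ at such $w$ is trivial. By Proposition~\ref{tcheb-prop} we choose a solvable $K/k$ containing $F$ so that every other finite place $w$ of $K$ has degree $\geq N$ over $k$, for a prescribed $N$. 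Temperedness of $\pi_K$ (bounding the Satake parameters $\alpha_{i,w}$ by $1$ at good places) together with the unitarity of Frobenius on $\rho_K$ then make the finite Euler product for $H_K$ absolutely convergent in $\Ree(s) > 1/N$.

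The standard functional equations for $L^*(s, \pi_K)$ and $L^*(s, \rho_K)$ combine to give
\[
 H_K(s) = \tilde\epsilon(s)\, \overline{H_K}(1 - s), \qquad \overline{H_K}(w) := \frac{L^*(w, \bar\pi_K)}{L^*(w, \bar\rho_K)},
\]
with $\tilde\epsilon$ entire and nowhere zero. Since $\overline{H_K}$ satisfies the same convergence estimate as $H_K$, this extends $H_K$ analytically to $\Ree(s) < 1 - 1/N$. If $N \geq 3$, the two half-planes $\Ree(s) > 1/N$ and $\Ree(s) < 1 - 1/N$ cover $\C$, so $H_K$ continues to an entire function; running the same argument with $H_K$ replaced by $H_K^{-1}$ gives that $H_K$ is moreover nowhere zero. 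Consequently $L^*(s, \rho_K) = L^*(s, \pi_K)/H_K(s)$ is an entire function divided by an entire nowhere-vanishing function, hence entire.

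The case distinctions in the statement record how much we must enlarge $K/k$ in order to achieve $N \geq 3$. For $p > 2$, every prime of $F$ has degree $1$ or $p \geq 3$ over $k$, so $K = F$ already suffices. For $p = 2$, the degree-two primes of $F$ must be split further; by class field theory this can be accomplished within a cyclic degree-$4$ extension $K/k$ containing $F$ precisely when $i \in F$ (so that the needed fourth-order cyclic character at the relevant primes exists), and otherwise forces us to first adjoin $i$ and then take an additional quadratic extension, producing the biquadratic $K/F$. The main obstacle is the archimedean bookkeeping: the ratio $L_\infty^*(s, \pi_K)/L_\infty^*(s, \rho_K)$ is not \emph{a priori} holomorphic, so to conclude entireness of $H_K$ one must use the explicit archimedean local Langlands match at places $v \mid \infty$ (together with $\det\rho = \omega_\pi$) to rule out spurious gamma-factor poles in the region $\Ree(s) > 1/N$, and it is this step where the arithmetic constraint on $i \in F$ versus $i \notin F$ enters most delicately.
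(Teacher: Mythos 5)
Your overall mechanism---form the ratio of completed $L$-functions, use temperedness of $\pi$ (Proposition~\ref{tempered-prop}) and unitarity of Frobenius on $\rho$ to get absolute convergence of the Euler product over high-degree primes, then reflect by the functional equation to cover all of $\C$---is exactly the paper's strategy, just phrased with the reciprocal $H_K = \Lrat_K^{-1}$. What is missing, though, is precisely the content that makes this a proposition rather than a repeat of Proposition~\ref{tempered-prop}: the \emph{explicit} small-degree construction of $K$.

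You invoke Proposition~\ref{tcheb-prop} to get ``some solvable $K/k$ containing $F$ with all extra primes of degree $\geq N$.'' That proves the first sentence of the statement, but the proposition specifically asserts $K = F$ when $p>2$, $K/k$ cyclic of degree $4$ when $p=2$ and $i\in F$, and $K/F$ biquadratic when $p=2$ and $i\notin F$. These small fields are not what Proposition~\ref{tcheb-prop} hands you, and they matter: the descent argument in \S4 relies on being able to \emph{deform} the specific field ($k(\alpha^{1/4}) \rightsquigarrow k((\alpha\beta)^{1/4})$, or $F_1(\alpha_1^{1/4}) \rightsquigarrow F_1((\alpha_1\beta_1)^{1/4})$) while preserving the degree structure. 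The paper therefore does not use Proposition~\ref{tcheb-prop} here; instead, for $p=2$ it uses the more refined fact that in a tower $K\supset F\supset k$ of cyclic $p$-extensions with $K/k$ cyclic of degree $p^2$, every unramified inert prime of $F/k$ lies under a degree-$p^2$ prime of $K/k$ (\cite[Lemma~4.4]{ramakrishnan}), and, when $i\notin F$, the explicit biquadratic $K = K_1K_2$ built from $F_1=k(i)$, $F_2 = k(\sqrt{-\alpha})$, $E=F(i)$, $K_i = F_i(\alpha_i^{1/4})$, together with \cite[Lemma~5.8]{ramakrishnan}, to see every degree-$2$ prime of $F/k$ lifts to primes of degree $\geq 4$. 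That verification is the heart of the proposition, and your proposal simply does not carry it out.

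Your closing explanation of where the hypothesis $i\in F$ enters is also off the mark. It has nothing to do with archimedean gamma-factor bookkeeping or with ``the needed fourth-order cyclic character at the relevant primes.'' The finitely many archimedean (and ramified) places all sit in the finite bad set $\Sigma$, and $\Lrat_{K,\Sigma}$ is holomorphic for $\Ree(s)>0$ automatically once $\pi$ is tempered and $\rho$ has finite image---independent of whether $i\in F$. The actual role of $i$ is algebraic: $k(\alpha^{1/4})$ is Galois over $k$ exactly when $i \in k(\alpha^{1/4})$, which holds (so that one cheap quartic field works) precisely when $i\in F$. When $i\notin F$, one must first adjoin $i$ to form $E=F(i)$, and then no single cyclic quartic over $k$ suffices; the biquadratic compositum is needed so that the two quartic pieces $K_1/F_1$ and $K_2/F_2$ jointly force every degree-$2$ prime of $F/k$ up to degree $\geq 4$ in $K/k$.
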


We will actually construct the field $K$ in the proof, and the explicit construction will be used
in the next section to prove our main theorem.

\begin{proof}
Consider the ratio of $L$-functions
\begin{equation}
 \Lrat_K(s) = \frac{L^*(s, \rho_K)}{L^*(s, \pi_K)},
\end{equation}
which is analytic for $\Ree(s) > 1$.
Since $L(s, \pi_K)$ has no poles, to show $L^*(s, \rho_K)$ is entire,
it suffices to show $\Lrat_K(s)$ has no poles.

As with $\Lambda$, we may write $\Lrat_K = \prod_v \Lrat_{K,v}$
and define $\Lrat_{K,S} = \prod_{v \in S} \Lrat_{K,v}$.   Then note that
we can write
\begin{equation} \label{Lrat-fact}
\Lrat_K(s) = \Lrat_{K, \Sigma}(s) \prod_{j > 1} \frac{L_{S_j}(s,\rho_K)}{L_{S_j}(s, \pi_K)},
\end{equation}
for some finite set $\Sigma$.  As with $\Lambda$, it satisfies a functional equation of the
form
\begin{equation}
\Lrat_K(s) = \epsilon_{\Lrat,K}(s) \Lrat_K(1-s),
\end{equation}
with $\epsilon_{\Lrat,K}(s)$ everywhere invertible.
Hence it suffices to show $\Lrat_K(s)$ is analytic in $\Ree(s) \ge \frac 12$.

Note $\Lrat_{K, \Sigma}(s)$ is analytic for
$\Ree(s) \ge \frac 12$ (in fact for $\Ree(s) > 0$, since $\pi$ must be tempered).
The same argument as in Lemma \ref{tempered-lemma}
shows that $L_{S_j}(s, \rho_K)$ and $L_{S_j}(s, \pi_K)$ are both holomorphic and
never zero in $\Ree(s) > \frac 1j$, using the fact that now we know $\pi$ is tempered
(Proposition \ref{tempered-prop}).   Hence if $p > 2$, then, already for $K=F$, \eqref{Lrat-fact}
implies that $\Lrat_K(s)$ is analytic in $\Ree(s) > \frac 1p$, and we are done.

Now suppose $p=2$.  Here we use a more explicit version of Proposition
\ref{tcheb-prop} for cyclic $p^2$-extensions:  if $K \supset F \supset k$ is a chain of cyclic
$p$-extensions with $K/k$ cyclic, then every unramified inert prime $v$ in $F/k$ lies under
a (unique) degree $p^2$ prime $w$ in $K/k$ \cite[Lemma 4.4]{ramakrishnan}.

Write $F = k(\sqrt \alpha)$ with $\alpha \in k$.   If $i \in F$, then $K = k(\alpha^{1/4})$
is a cyclic extension of degree 4.  Hence the lemma just quoted means that
$\Lrat_K(s)$ is analytic in $\Ree(s) > \frac 14$ and we are done.

We may therefore assume $i \not \in F$.  Put $F_1 = k(i)$ and $F_2 = k(\sqrt{-\alpha})$.
Let $E = k(i, \sqrt \alpha) = F(i)$ be the compositum of these fields, which is biquadratic over $k$.
For $i=1, 2$, let $\alpha_i \in F_i$ such that $E = F_i(\sqrt{\alpha_i})$, and
put $K_i = F_i(\alpha_i^{1/4})$.  Then $K_i/F_i$ is cyclic of degree 4 with
with $E$ as the intermediate subfield ($i = 1, 2$).  Denote by $K$ the compositum $K_1 K_2$.
(This construction of $K$ is the $p=2$ case of a construction given in
 \cite[Section 5]{ramakrishnan}.)  Here is a diagram for the case $i \not \in F$.

\begin{center}
\begin{tikzcd}[every arrow/.append style={-}]
{} & K  \arrow{ld} \arrow{rd}  \arrow{dd}{\Z/2 \times \Z/2}  & {}  \\
K_1 \arrow{dr}  \arrow{dd}{\Z/4} &
{} & K_2 \arrow{ld}  \arrow{dd}{\Z/4}  \\
{} & E = F(i) \arrow{d} \arrow{ld} \arrow{rd}  & {} \\
F_1 = k(i) \arrow{dr} &  F \arrow{d} & F_2 \arrow{ld} \\
{} & k & {}
\end{tikzcd}
\end{center}

Fix any prime $v$ of degree 2 in $F/k$.  We claim any prime $w$ of $K$
above $v$ has degree $\ge 4$ over $k$.  If not, $v$ splits into 2 primes
$v_1, v_2$ in $E$.  Say $w | v_1$.   By
\cite[Lemma 5.8]{ramakrishnan}, $v_1$ is degree 2 either over $F_1$ or over $F_2$.
Let $i \in \{ 1, 2 \}$ be such that $v_1$ is degree 2 over $F_i$, and let $u$ be the prime of
$F_i$ under $v_i$.  Then $u$ is inert in $K_i/F_i$, and so $v_1$ is inert in $K_i/E$.  Hence
$w$ has degree $\ge 2$ in $K/E$, and therefore degree $\ge 4$ in $K/k$, as claimed.

Thus, for such $K$, $\Lrat_K(s)$ is analytic in $\Ree(s) > \frac 14$.
\end{proof}

\begin{remark} The above argument for $p > 2$ in fact shows the following: if $F/k$ is any
extension, $\pi$ is tempered, and $\rho$ corresponds to $\pi$ at all but finitely many places of degree
$\le 2$, then $L^*(s, \rho)$ is entire.
\end{remark}

\begin{corollary}
With $K$ as in the previous proposition, and $\chi$ and idele class character of $K$.
Then $L^*(s, \rho_K \otimes \chi)$ is entire.
\end{corollary}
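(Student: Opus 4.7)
The plan is to adapt the argument of Proposition \ref{prop2} to the twisted setting. First, reduce to the case where $\chi$ is unitary by writing $\chi = \chi_0 \cdot |\cdot|^{it}$ with $\chi_0$ unitary and $t \in \R$, and absorbing $|\cdot|^{it}$ into the complex variable $s$. Then form the twisted ratio
$$
\Lrat_K^\chi(s) = \frac{L^*(s, \rho_K \otimes \chi)}{L^*(s, \pi_K \otimes \chi)}.
$$
Two observations make the Proposition \ref{prop2} argument transfer essentially unchanged: (i) local Langlands is compatible with twisting by characters, so $\rho_{K,v} \otimes \chi_v$ and $\pi_{K,v} \otimes \chi_v$ fail to correspond at exactly the same set $S_j$ of places as the untwisted representations, and in particular the factorization \eqref{Lrat-fact} for $\Lrat_K^\chi$ has the same shape; (ii) unitarity of $\chi$ preserves both the Satake bound $|\alpha_{i,v}| < q_v^\delta$ on the parameters of $\pi_K$ (which comes from temperedness of $\pi$ established in Proposition \ref{tempered-prop}) and the bound $|\beta_{i,v}| = 1$ on the Frobenius eigenvalues of $\rho_K$.

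Given these, apply the estimate of Lemma \ref{tempered-lemma} to the twisted partial factors $L_{S_j}(s, \pi_K \otimes \chi)$ and $L_{S_j}(s, \rho_K \otimes \chi)$, and use the ``degree $\geq 4$ in $K/k$'' property of primes established during the construction of $K$ in Proposition \ref{prop2} to conclude that $\Lrat_K^\chi(s)$ is analytic in the half-plane $\Ree(s) > \frac{1}{4}$. A functional equation for $\Lrat_K^\chi$, entirely analogous to the one for $\Lambda_K$ used earlier, then extends the analyticity to all of $\C$, showing that $\Lrat_K^\chi$ is entire.

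To pass from entireness of the ratio to entireness of the numerator, invoke that $L^*(s, \pi_K \otimes \chi)$ has no poles: if $\pi_K$ is cuspidal then so is $\pi_K \otimes \chi$, and the claim is immediate. The main obstacle I anticipate is the edge case where $\pi_K$ arises by Langlands base change as an isobaric sum of unitary Hecke characters $\eta_1 \boxplus \eta_2$, and $\chi$ is chosen so that $\eta_i \chi = 1$ for some $i$; here $L^*(s, \pi_K \otimes \chi)$ acquires a pole at $s=1$. The resolution is to observe that matching at degree-one primes together with Hecke's strong multiplicity one for $\GL(1)$ forces $\rho_K$ to contain the corresponding character, so $L^*(s, \rho_K \otimes \chi)$ carries the same pole, which cancels in the ratio; entireness of the numerator in this degenerate case requires checking that no additional poles are introduced, which follows from the same factorization and analyticity bounds. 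With this verification, $L^*(s, \rho_K \otimes \chi) = \Lrat_K^\chi(s) \cdot L^*(s, \pi_K \otimes \chi)$ is entire.
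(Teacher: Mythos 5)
Your core strategy matches the paper's: the paper observes that if $\rho_K$ and $\pi_K$ locally correspond at all but finitely many places of degree $\le 2$ over $k$, then so do $\rho_K\otimes\chi$ and $\pi_K\otimes\chi$, and it then invokes the Remark following Proposition~\ref{prop2}, which packages exactly the twisted-ratio argument you unpack by hand (analyticity of $\Lrat_{K,\Sigma}$ and of the partial pieces $L_{S_j}$ for $\Ree(s) > 1/j$, the ``degree $\ge 4$'' property of $K/k$, and the functional equation). So for the main thread your proposal is correct and is essentially a more explicit rendering of the paper's one-line proof.

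Where the proposal goes wrong is precisely in the edge case you flag. You correctly observe that if $\pi_K$ is an isobaric sum $\eta_1 \boxplus \eta_2$ and $\chi = \eta_i^{-1}$, then $L^*(s, \pi_K\otimes\chi)$ has a pole at $s=1$. But your resolution is internally contradictory: you argue that $\rho_K\otimes\chi$ must then contain the trivial character, so that $L^*(s, \rho_K\otimes\chi)$ ``carries the same pole'' --- and in the very next clause you conclude that $L^*(s, \rho_K\otimes\chi)$ is entire. Those two statements cannot both hold. Moreover, the pole cannot be removed by the ratio argument: $\Lrat^\chi_K$, being an absolutely convergent Euler product of nonvanishing local factors at $s=1$, is \emph{nonzero} there, so the identity $L^*(s,\rho_K\otimes\chi) = \Lrat^\chi_K(s)\cdot L^*(s,\pi_K\otimes\chi)$ forces a genuine pole of the numerator at $s=1$ in this case. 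In other words, if this degenerate configuration occurs the corollary's literal conclusion fails for that $\chi$. When $\pi_K$ is cuspidal (equivalently, when $\pi$ is not dihedral with respect to any quadratic subextension of $K/F$), the issue simply does not arise, and your argument is complete; the reducible situation would need to be handled separately (and is in any case outside the scope of the converse-theorem application that this corollary feeds into). Your write-up should either restrict to $\pi_K$ cuspidal, or honestly acknowledge the pole rather than claim it cancels.
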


\begin{proof}
First suppose $p > 2$, so we can take $K=F$.  Then if $\chi$ is an idele class character of $F$,
$\rho \otimes \chi$ and $\pi \otimes \chi$ locally correspond at almost all degree 1 places,
so the previous proposition again applies to  say $L^*(s, \rho \otimes \chi)$ is entire.

Now suppose $p = 2$, and let $K$ be an extension as constructed in the proof of the above
proposition.  Then, for all but finitely many places of degree $\le 2$, we have that $\rho_K$ and $\pi_K$
locally correspond.  Hence the same is true for $\rho_K \otimes \chi$ and $\pi_K \otimes \chi$
for an idele class character $\chi$ of $K$.  The above remark  then gives the corollary.
\end{proof}

\begin{corollary} With $K$ as in proposition above,
the Artin representation $\rho_K$ is automorphic, and corresponds
to the base change $\pi_K$.
\end{corollary}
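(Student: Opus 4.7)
The plan is to combine the Jacquet--Langlands converse theorem (Theorem~\ref{converse}) with strong multiplicity one. First, the previous corollary supplies that $L^*(s, \rho_K \otimes \chi)$ is entire for every idele class character $\chi$ of $K$. Because $\rho_K$ is an Artin representation, Brauer's theorem expresses each such $L$-function as a ratio of entire functions of finite order, so entireness upgrades to finite order and hence to boundedness in vertical strips. Applying the converse theorem then produces an isobaric automorphic representation ${}_K\Pi$ of $\GL_2(\A_K)$ with $L(s, {}_K\Pi_w) = L(s, \rho_{K,w})$ at almost every place $w$ of $K$ and with central character matching $\det\rho_K$.

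The next step is to identify ${}_K\Pi$ with the base change $\pi_K$. At almost every degree-$1$ prime $v$ of $F/k$, both $\pi_v$ and $\rho_v$ are unramified with matching $L$-factors, hence with identical Satake parameters. Local base change to $K$ then forces $\pi_{K,w} \simeq {}_K\Pi_w$ at every unramified prime $w$ of $K$ above such $v$. The key density input is that primes of $K$ not lying above a degree-$1$ prime of $F/k$ must lie above the inert primes of $F/k$; such $w$ satisfy $N(w) \ge N(u)^p$ with $u = w \cap k$, so their count up to norm $x$ is $O(x^{1/p}/\log x)$ by the prime number theorem, giving natural density $0$.

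Hence $\pi_K$ and ${}_K\Pi$ agree at a density-$1$ set of unramified places of $K$. Since both are isobaric representations of $\GL_2(\A_K)$ with central character $\omega_{\pi_K} = \det\rho_K$ (the latter equality inherited by base change from the identification $\det\rho = \omega_\pi$ established in Section~1), strong multiplicity one for $\GL_2$ (Jacquet--Shalika) yields $\pi_K = {}_K\Pi$, completing the identification.

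The one subtlety to verify carefully is that the converse theorem delivers an honest isobaric automorphic representation with the expected central character rather than a more exotic analytic object; this is built into the statement of Theorem~\ref{converse} together with the compatibility $\det\rho_K = \omega_{\pi_K}$, so no genuinely new obstacle arises.
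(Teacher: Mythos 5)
The overall architecture of your argument matches the paper's: apply the converse theorem to get automorphy of $\rho_K$, note agreement at a density-$1$ set, and then invoke a multiplicity-one result. But the specific multiplicity-one result you cite is the wrong one, and this is a genuine gap.

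The set of places of $K$ at which you establish $\pi_{K,w} \simeq {}_K\Pi_w$ — namely, those lying above all but finitely many degree-$1$ primes of $F/k$ — has density $1$, but its complement is \emph{infinite}: there are infinitely many primes of $F$ of degree $\geq 2$ over $k$, and you have no control over what happens at primes of $K$ above them. The classical strong multiplicity one theorem of Jacquet--Shalika requires agreement at all but \emph{finitely} many places, so it does not apply here. What is needed (and what the paper actually cites) is Ramakrishnan's refinement of strong multiplicity one for $\GL(2)$ \cite{ramakrishnan-SMO}, which upgrades the hypothesis to a set of density $> 7/8$. With that citation in place, the rest of your argument (entireness and boundedness in vertical strips via Brauer, the density computation, the central character match) is correct and follows the paper's route.
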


\begin{proof}
The automorphy of $\rho_K$ now follows from the previous corollary and the converse
theorem (Theorem \ref{converse}).   Furthermore, since $\rho_K$ corresponds to an automorphic
representation which agrees with $\pi_K$ at a set of places of density 1, it must agree with
$\pi_K$ everywhere by \cite{ramakrishnan-SMO}.
\end{proof}

\section{Descent}

In this section, we will finish the proof of Theorem A, that $\rho$ corresponds to $\pi$
globally.  The last corollary (together with Proposition \ref{prop:LLC})
already tells us this is the case if $p > 2$, so we may assume
$p=2$ in what follows.  In the previous section, we showed $\rho_K$ corresponds to $\pi_K$
over a suitable
quadratic (when $i \in F$) or biquadratic (when $i \not \in F$) extension $K/F$.
Here we will show that by varying our choice of $K$, we can descend this correspondence
to $F$.

As above, let $S_2$ be the set of finite unramified primes $v$ of degree 2 in $F/k$ at which
$\rho_{v}$ and $\pi_v$ are both unramified but do not correspond.
If $S_2$ is finite, then an argument of Deligne--Serre \cite{deligne-serre}
(cf.\ \cite[Appendix A]{martin} or Proposition \ref{prop:LLC})
already tells us $\rho$ and $\pi$ correspond everywhere (over $F$ at finite places and
at the archimedean place over $\Q$), so
we may assume $S_2$ is infinite.
As before, write $F$ as $k(\sqrt{\alpha})$ for some $\alpha \in k$.

\medskip
First suppose that $F$ contains $i$.  Then, as in the proof of Proposition \ref{prop2},
 $K = k(\alpha^{1/4})$ is a quadratic extension of $F$, with $K$ cyclic over $k$,
 and $\pi_K$ corresponds to $\rho_K$ globally.
 Let $T_2$ denote the subset of $S_2$ consisting of finite places $v$ of $F$
which are unramified in $K$. Clearly, the
complement of $T_2$ in $S_2$ is finite.

We claim that $T_2$ is empty, so that $\rho$ and $\pi$
correspond {\it everywhere}, already over $F$. Suppose not. Pick any element $v_0$ of $T_2$.
Since det$(\rho)$ corresponds to $\omega_\pi$ at all places, and since $\rho_K$ and $\pi_K$ correspond
exactly, if $\chi_{v_0}$ is the quadratic character of $F_{v_0}$ attached to the
quadratic extension $K_{\bar v_0}$, $\rho_{v_0}$ must correspond to $\pi_{v_0}\otimes
\chi_{v_0}$.
As $K_{\bar v_0}/F_{v_0}$ is
unramified, so is $\chi_{v_0}$.

Now we may modify the choice of $K$ as
follows. Pick a $\beta$ in $k$ which is a square but not a fourth power, and put $\tilde
K= k((\alpha\beta)^{1/4})$. Then $\tilde K$ contains $F$ and is a cyclic quartic
extension of $k$ (just like $K$), so that all but a finite number of places in $S_2$ are inert in $\tilde K$.
But now we may choose $\beta$ such that $v_0$ ramifies in
$\tilde K$. It follows (as above) that over $\tilde K$, the base changes of of $\pi$ and $\rho$
correspond everywhere, and that $\rho_{v_0}$ corresponds to $\pi_{v_0}\otimes
\tilde\chi_{v_0}$, where $\tilde\chi_{v_0}$ is now the ramified local character at $v_0$ (attached
to $\tilde{K}/ F$). This gives a contradiction as  we would then need
$\pi_{v_0}\otimes\chi_{v_0}$ to be isomorphic to $\pi_{v_0}\otimes \tilde \chi_{v_0}$ (only the
latter twist is ramified). So the only way to resolve this
is to have $\rho_{v_0}$ correspond to $\pi_{v_0}$. Then $v_0$ cannot lie in $T_2$. As it was
taken to be a general element of $T_2$, the whole set $T_2$ must be empty, proving the
claim. We are now done if $i$ belongs to $F$.

\medskip
Thus we may assume from here on that $i \not \in F$. Then $E=F(i)$ is a
biquadratic extension of $k$, and every element $v$ of $S_2$ which is unramified in $E$
must split there, say into $v_1$ and $v_2$.  So it suffices to prove that $\rho$ and $\pi$
correspond exactly over $E$.  Suppose not.  Without loss of generality,
say $\rho_{E,v_1}$ and $\pi_{E,v_1}$ do not correspond.

Consider the construction of $K$, biquadratic over $F$, with subfields $K_1, K_2$,
$E$, $F_1$, $F_2$ as in the proof of Proposition \ref{prop2}.  Interchanging indices
on $K_i$'s and $F_i$'s if necessary, we may assume  that
$v_1$ has degree 2 in $E/F_1$, so it lies under a unique prime $w$ in
$K_1 = F_1(\alpha_1^{1/4})$.  Since $w$ splits in $K$, $\rho_{K_1, w}$ must correspond to
$\pi_{K_1, w} \otimes \chi$, where $\chi$ is the unramified quadratic character attached to
$K_{1,w}/E_{v_1}$.

As in the previous case, we may modify $K_1$ to $\tilde K_1 = F_1((\alpha_1 \beta_1)^{1/4})$
for some $\beta_1 \in F_1$ which is a square but not a fourth power.   We may choose
$\beta_1$ so that $\tilde K_1/E$ is ramified.
Then $\tilde K_1/F_1$ is a cyclic extension of degree 4 containing $E$, and Proposition
\ref{prop2} is still valid with $K$ replaced by $\tilde K = \tilde K_1 K_2$.  Let $\tilde w$
be the prime of $\tilde K_1$ over $v_1$.  Since $\tilde w$ splits over $\tilde K$, we
also have that $\rho_{\tilde K_1, w}$ corresponds to $\pi_{\tilde K_1, w} \otimes \tilde \chi$,
where $\tilde \chi$ is the ramified quadratic character associated to $\tilde K_{1,w}/E_{v_1}$.
Again, this implies $\pi_{K_1, w} \otimes \chi \simeq \pi_{\tilde K_1, w} \otimes \tilde \chi$,
which is a contradiction.
\qed

\begin{bibdiv}
\begin{biblist}

\bib{blomer-brumley}{article}{
   author={Blomer, Valentin},
   author={Brumley, Farrell},
   title={On the Ramanujan conjecture over number fields},
   journal={Ann. of Math. (2)},
   volume={174},
   date={2011},
   number={1},
   pages={581--605},
   issn={0003-486X},
   review={\MR{2811610}},
   doi={10.4007/annals.2011.174.1.18},
}

\bib{booker-krishnamurthy}{article}{
   author={Booker, Andrew R.},
   author={Krishnamurthy, M.},
   title={A strengthening of the ${\rm GL}(2)$ converse theorem},
   journal={Compos. Math.},
   volume={147},
   date={2011},
   number={3},
   pages={669--715},
   issn={0010-437X},
   review={\MR{2801397 (2012f:11090)}},
   doi={10.1112/S0010437X10005087},
}

\bib{deligne-serre}{article}{
   author={Deligne, Pierre},
   author={Serre, Jean-Pierre},
   title={Formes modulaires de poids $1$},
   language={French},
   journal={Ann. Sci. \'Ecole Norm. Sup. (4)},
   volume={7},
   date={1974},
   pages={507--530 (1975)},
   issn={0012-9593},
   review={\MR{0379379 (52 \#284)}},
}

\bib{gelbart-jacquet}{article}{
   author={Gelbart, Stephen},
   author={Jacquet, Herv{\'e}},
   title={A relation between automorphic representations of ${\rm GL}(2)$
   and ${\rm GL}(3)$},
   journal={Ann. Sci. \'Ecole Norm. Sup. (4)},
   volume={11},
   date={1978},
   number={4},
   pages={471--542},
   issn={0012-9593},
   review={\MR{533066 (81e:10025)}},
}

\bib{Jacquet-Langlands}{book}{
    author = {Jacquet, H},
    author = {Langlands, R. P.},
     title = {Automorphic forms on {${\rm GL}(2)$}},
    series = {Lecture Notes in Mathematics},
    volume={114},
    publisher = {Springer-Verlag, Berlin-New York},
      date = {1970},
     pages = {vii+548},
    review={\MR{0401654 (70)}},
}

\bib{JS}{article}{
   author={Jacquet, Herv{\'e}},
   author={Shalika, Joseph},
   title={A lemma on highly ramified $\epsilon$-factors},
   journal={Math. Ann.},
   volume={271},
   date={1985},
   number={3},
   pages={319--332},
   issn={0025-5831},
   review={\MR{787183 (87i:22048)}},
   doi={10.1007/BF01456070},
}

\bib{khare}{article}{
   author={Khare, Chandrashekhar},
   title={Serre's conjecture and its consequences},
   journal={Jpn. J. Math.},
   volume={5},
   date={2010},
   number={1},
   pages={103--125},
   issn={0289-2316},
   review={\MR{2609324 (2011d:11121)}},
   doi={10.1007/s11537-010-0946-5},
}

\bib{kim-sarnak}{article}{
   author={Kim, Henry H.},
   title={Functoriality for the exterior square of ${\rm GL}_4$ and the
   symmetric fourth of ${\rm GL}_2$},
   note={With appendix 1 by Dinakar Ramakrishnan and appendix 2 by Kim and
   Peter Sarnak},
   journal={J. Amer. Math. Soc.},
   volume={16},
   date={2003},
   number={1},
   pages={139--183},
   issn={0894-0347},
   review={\MR{1937203 (2003k:11083)}},
   doi={10.1090/S0894-0347-02-00410-1},
}

\bib{kim-shahidi}{article}{
   author={Kim, Henry H.},
   author={Shahidi, Freydoon},
   title={Cuspidality of symmetric powers with applications},
   journal={Duke Math. J.},
   volume={112},
   date={2002},
   number={1},
   pages={177--197},
   issn={0012-7094},
   review={\MR{1890650 (2003a:11057)}},
   doi={10.1215/S0012-9074-02-11215-0},
}

\bib{kutzko}{article}{
   author={Kutzko, Philip},
   title={The Langlands conjecture for ${\rm Gl}_{2}$ of a local field},
   journal={Ann. of Math. (2)},
   volume={112},
   date={1980},
   number={2},
   pages={381--412},
   issn={0003-486X},
   review={\MR{592296 (82e:12019)}},
   doi={10.2307/1971151},
}

\bib{langlands}{book}{
   author={Langlands, Robert P.},
   title={Base change for ${\rm GL}(2)$},
   series={Annals of Mathematics Studies},
   volume={96},
   publisher={Princeton University Press, Princeton, N.J.; University of
   Tokyo Press, Tokyo},
   date={1980},
   pages={vii+237},
   isbn={0-691-08263-4},
   isbn={0-691-08272-3},
   review={\MR{574808 (82a:10032)}},
}

\bib{LRS}{article}{
   author={Luo, Wenzhi},
   author={Rudnick, Ze{\'e}v},
   author={Sarnak, Peter},
   title={On the generalized Ramanujan conjecture for ${\rm GL}(n)$},
   conference={
      title={Automorphic forms, automorphic representations, and arithmetic},
      address={Fort Worth, TX},
      date={1996},
   },
   book={
      series={Proc. Sympos. Pure Math.},
      volume={66},
      publisher={Amer. Math. Soc., Providence, RI},
   },
   date={1999},
   pages={301--310},
   review={\MR{1703764 (2000e:11072)}},
}

\bib{martin}{book}{
   author={Martin, Kimball},
   title={Four-dimensional Galois representations of solvable type and
   automorphic forms},
   note={Thesis (Ph.D.)--California Institute of Technology},
   publisher={ProQuest LLC, Ann Arbor, MI},
   date={2004},
   pages={77},
   isbn={978-0496-11395-8},
   review={\MR{2706615}},
}

\bib{ramakrishnan}{article}{
   author={Ramakrishnan, Dinakar},
   title={A mild Tchebotarev theorem for ${\rm GL}(n)$},
   journal={J. Number Theory},
   volume={146},
   date={2015},
   pages={519--533},
   issn={0022-314X},
   review={\MR{3267122}},
   doi={10.1016/j.jnt.2014.08.002},
}

\bib{ramakrishnan-SMO}{article}{
   author={Ramakrishnan, Dinakar},
   title={A refinement of the strong multiplicity one theorem for ${\rm
   GL}(2)$. Appendix to: ``$l$-adic representations associated to modular
   forms over imaginary quadratic fields. II'' [Invent.\ Math.\ {\bf 116}
   (1994), no.\ 1-3, 619--643; MR1253207 (95h:11050a)] by R. Taylor},
   journal={Invent. Math.},
   volume={116},
   date={1994},
   number={1-3},
   pages={645--649},
   issn={0020-9910},
   review={\MR{1253208 (95h:11050b)}},
   doi={10.1007/BF01231576},
}

\bib{Rogawski-Tunnell}{article}{
   author={Rogawski, J. D.},
   author={Tunnell, J. B.},
   title={On Artin $L$-functions associated to Hilbert modular forms of
   weight one},
   journal={Invent. Math.},
   volume={74},
   date={1983},
   number={1},
   pages={1--42},
   issn={0020-9910},
   review={\MR{722724 (85i:11044)}},
   doi={10.1007/BF01388529},
}

\bib{wiles}{article}{
   author={Wiles, A.},
   title={On ordinary $\lambda$-adic representations associated to modular
   forms},
   journal={Invent. Math.},
   volume={94},
   date={1988},
   number={3},
   pages={529--573},
   issn={0020-9910},
   review={\MR{969243 (89j:11051)}},
   doi={10.1007/BF01394275},
}

\end{biblist}
\end{bibdiv}

\end{document}